\newtheorem{theorem}{Theorem}[section]
\newtheorem{corollary}[theorem]{Corollary}
\newtheorem{lemma}[theorem]{Lemma} 
\newtheorem{remark}[theorem]{Remark}
\numberwithin{equation}{section}
\theoremstyle{definition}
\newtheorem{definition}[theorem]{Definition}%[sec
\newcounter{example}
\newcommand{\C}{\ensuremath{{\mathbb C}}}
\newcommand{\Q}{\ensuremath{{\mathbb Q}}}
\newcommand{\Z}{\ensuremath{{\mathbb Z}}}
\author{ NICHOLAS AIDOO}
\address{Department of Mathematical Sciences, Norwegian University of Science and Technology, Trondheim, Norway}
\email{nicholas.aidoo@ntnu.no}
\subjclass[2020]{Primary  32F18; Secondary 32T27; 32T25.}
\keywords{Boundary systems, Sum of squares domains, Catlin multitype, Commutator multitype}
\title{CATLIN'S BOUNDARY SYSTEMS FOR SUMS OF SQUARES DOMAINS}
\date{}
\begin{document}

\maketitle

\noindent ABSTRACT: {\normalsize  For any given sum of squares domain in $\C^n,$ we reduce the complexity in Catlin's multitype techniques by giving a complete normalization of the geometry. Using this normalization result, we present a more elementary proof of the equality of the Catlin multitype and the commutator multitype for such domains when both invariants are finite. Finally, we reformulate algebraically Catlin's machinery for the commutator multitype computation at the origin for any given sum of squares domain in $\C^n$.}

\vspace{1cm}

\section{Introduction} \label{s1}

The multitype introduced by Catlin in \cite{catlinbdry} is one of the significant CR-invariants studied in most problems related to the boundary geometry of pseudoconvex domains and its applications. The definition of the multitype gives no hint how to compute it and so Catlin addresses this issue in \cite{catlinbdry} by introducing another CR-invariant known as the commutator multitype, which is computed by differentiating the Levi form along certain lists of vector fields generated from a geometric object known as \textit{a boundary system}.  He further established that the two boundary invariants are equal for any given pseudoconvex domain. The multitype is also related to the boundary invariant, which was initially studied by Kohn in \cite{kboundary} for hypersurfaces in $\C^2$ and later by Bloom and Graham \cite{bloomgraham} for hypersurfaces in $\C^n.$

The goal of this paper is to give a simple and elementary proof of the equality of the Catlin multitype  and the commutator multitype at a boundary point of any sum of squares domain in $\C^n$ given that both boundary invariants are finite. Our first main theorem is thus formulated below. 

\begin{theorem} \label{equalm}
	Let $0 \in M$ be a smooth real hypersurface in $\C^n$ with defining function $$ r = \mathrm{Re}\, z_1+ \sum_{j=1}^N |f_j(z_2, \dots, z_n)|^2,  $$ where $f_1, \ldots, f_N$ are holomorphic functions near $\mathrm{0}.$ Suppose that $\mathfrak{B}_n(0)$ is a boundary system defined near the origin. Denote by $\mathfrak{C}(0)$ and $\mathfrak{M}(0),$ the commutator multitype and the multitype at the origin respectively and suppose that they are both finite.
	
	Then 
	\[
	\mathfrak{C}(0) = \mathfrak{M}(0).
	\]
	
\end{theorem}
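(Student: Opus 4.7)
The plan is to leverage the sum of squares structure together with the normalization theorem established earlier in the paper to convert both invariants into the same explicit weighted vanishing data of the holomorphic functions $f_1,\ldots,f_N$, and then read off the equality from this common description.

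First, I would invoke the normalization to place $r$ in a canonical form in which the Catlin multitype $\mathfrak{M}(0)=(1,\mathfrak{m}_2,\ldots,\mathfrak{m}_n)$ is the lexicographically smallest tuple $\Lambda$ for which $r$ admits a nontrivial $\Lambda$-weighted homogeneous component; in these coordinates each $f_j$ splits as a weighted homogeneous leading part plus a strictly higher weight remainder. In parallel, I would construct an explicit boundary system by taking type $(1,0)$ vector fields $L_k=\partial/\partial z_k+b_k\,\partial/\partial z_1$ with $b_k$ chosen so that $L_k r\equiv 0$ on $M$, and by letting the auxiliary defining functions $r_k$ record the Levi behavior of the previously chosen $L_i$. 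The key computational advantage of the sum of squares form is that the Levi form equals $\partial\bar\partial r=\sum_j\partial f_j\wedge\bar\partial\bar f_j$, so every iterated commutator acting on $r$ evaluates to an explicit sum of polarized products of holomorphic derivatives of the $f_j$.

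Second, I would run an induction on the coordinate index $k\ge 2$. Having matched $\mathfrak{c}_i=\mathfrak{m}_i$ for $i<k$, both entries should equal the smallest weighted order, with respect to the weights $(1,\mathfrak{m}_2,\ldots,\mathfrak{m}_{k-1},\infty,\ldots,\infty)$, of any holomorphic derivative of some $f_j$ that involves the coordinate $z_k$ nontrivially. The inequality $\mathfrak{c}_k\le\mathfrak{m}_k$ is the easy direction, since the boundary system extracted from the normal coordinates directly realizes the multitype weights. The reverse inequality $\mathfrak{m}_k\le\mathfrak{c}_k$ requires that for any admissible choice of boundary-system vector fields the corresponding iterated commutator still vanishes to at least the predicted order; this is where the explicit Levi form expression is used to translate the commutator computation into a minimization over weighted orders of derivatives of the $f_j$, which by definition of $\mathfrak{M}(0)$ is bounded below by $\mathfrak{m}_k$.

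The main obstacle I expect is controlling potential cancellations in this lower bound: several $f_j$ may share the same weighted vanishing order and their leading parts can interact nontrivially under the polarized products that arise in the iterated commutators. The sum of squares hypothesis is precisely the ingredient that rules out pathological cancellation, since each $|\partial f_j|^2$ enters the Levi form nonnegatively and thus the diagonal sum cannot vanish faster than the slowest summand. Still, verifying this cleanly in the inductive step will require tracking how the vector fields $L_k$ depend on the previously fixed $L_2,\ldots,L_{k-1}$ and on the auxiliary real functions $r_2,\ldots,r_{k-1}$ in the boundary system. Once this noncancellation is established, both invariants read off the same list of weights and the equality $\mathfrak{C}(0)=\mathfrak{M}(0)$ follows.
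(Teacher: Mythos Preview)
Your plan diverges from the paper's proof in both structure and in which inequality you regard as the hard one. In the paper the direction $\mathfrak{M}(0)\le\mathfrak{C}(0)$ (your ``hard'' direction $\mathfrak{m}_k\le\mathfrak{c}_k$) is not argued at all: it is simply cited from Catlin's original paper, where it is proved for arbitrary pseudoconvex hypersurfaces with no sum-of-squares hypothesis. Your concern about cancellation among the polarized products $\sum_j (\partial^\alpha f_j)(\overline{\partial^\beta f_j})$ is therefore moot for this theorem; the non-cancellation phenomenon you describe is real and is exploited elsewhere in the paper (Lemma~\ref{squares} on crossterm invariance), but it is not the mechanism behind $\mathfrak{M}\le\mathfrak{C}$.

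For the other direction, $\mathfrak{M}(0)\ge\mathfrak{C}(0)$, the paper does not compute commutator lengths at all. Instead it argues conceptually: the normalization result (Theorem~\ref{norm}) produces holomorphic coordinates in which the boundary system takes the form $\tilde r_k=\mathrm{Re}\,z_k+o(\lambda_k^{-1})$, $\tilde L_k=\partial_{z_k}+o(\lambda_k^{-1})$; in these coordinates the defining function has the shape of Remark~\ref{distgrk}, so $\mathfrak{C}(0)$ is a \emph{distinguished weight}, and $\mathfrak{M}(0)\ge\mathfrak{C}(0)$ is then immediate from the definition of the multitype. You never invoke the notion of distinguished weight, and your plan to exhibit a specific boundary system achieving the entries $\mathfrak{m}_k$ would still require the (nontrivial) fact that the commutator multitype is independent of the boundary system chosen before it yields $\mathfrak{c}_k\le\mathfrak{m}_k$. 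The paper's route via distinguished weights sidesteps this entirely and is considerably shorter than the inductive, computational program you outline.
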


Even with the introduction of the commutator multitype, the multitype is still quite difficult to compute. Since the multitype has many interesting properties, we seek to obtain new ideas and methods that can reduce the complexities in its computation and also give a simpler geometric or algebraic characterization. For instance, obtaining an in-depth geometric or algebraic insight into the multitype techniques, polynomial models and the boundary systems developed by Catlin in \cite{catlinbdry} is relevant to the study of global regularity and subelliptic estimates of the $\bar \partial$-Neumann problem on pseudoconvex domains established in \cite{C84, C87}. Some works that can be directly or indirectly related to the problem of finding new techniques to reduce the difficulty in the computation of the multitype are given below.

McNeal \cite{mcneal} proved the equality of  the order of contact of the boundary with complex analytic varieties (the D'Angelo type, see \cite{opendangelo}) and the order of contact with complex lines, for convex domains of finite type in $\C^n.$ Boaz and Straube \cite{bs} gave a direct geometric proof of the same equality for convex domains as proved by McNeal and extended their results to a wider class of domains. Yu \cite{jiyeyu} studied the multitype for convex domains and obtained a simple geometric characterization of the multitype for convex domains in terms of the orders of contact of complex lines with the boundary of the domain. Kol\' a\v r \cite{martinIMRN} introduced a constructive algorithm for the computation of the multitype when all its entries are finite using weighted homogeneous polynomial transformations. Zaitsev in \cite{DZ19} introduced new invariant ideal sheaves containing functions arising in Catlin's boundary systems and gave a more direct computation of the kernels of some newly introduced invariant tensors that simplify the iterative construction of such boundary systems. Basyrov, Nicoara, and Zaitsev \cite{bdn} presented a technique that further reduced the complexity of the multitype computation by establishing that the polynomial model of every pseudoconvex domain of finite Catlin multitype contains a simple sum of squares of monomials, the so-called \textit{balanced sum of squares}. Recently, the author in \cite{Aidoo2022} applied the Kol\' a\v r algorithm for the multitype computation introduced in \cite{martinIMRN} to a class of manifolds defined by sums of squares of holomorphic functions, also referred to as \textit{special domains} by Kohn in \cite{kboundary} and proved two results. The first result proved that the model of a sum of squares domain is likewise a sum of squares and the second result established that the multitype is an invariant of the ideal of holomorphic functions defining the domain. Also, in the same paper, the author gave an algebraic reformulation of the Kol\' a\v r algorithm for the multitype computation and further gave a method that explicitly constructed the weighted homogeneous polynomial transformations needed in the Kol\' a\v r algorithm.

An interesting discovery in \cite{bdn} is the obstruction to obtaining a complete normalization of any Catlin boundary system for a general pseudoconvex domain via any holomorphic change of coordinates. This failure to obtain a complete normalization is attributed to the occurrences of the so-called \textit{torsion} in the boundary. Obviously, any strongly pseudoconvex domain will have no torsion in the boundary since there always exists a maximal Levi-nondegenerate subbundle that gives a nonsingular Levi matrix at the origin. Thus, a complete normalization of the associated boundary system can always be achieved for such domains. It is therefore of crucial interest to find out the classes of pseudoconvex domains for which there exist a complete normalization of the Catlin boundary systems. In this paper, we ask the following natural questions for any given sum of squares domain in $\C^n:$  

\begin{itemize}
	
	\item[(i).]  Can torsion occur in the boundary?
	
	\item[(ii).] Can the associated Catlin boundary system of such a domain be completely normalized?
\end{itemize} 

\noindent We will show that a negative answer to the first question answers the last question in the affirmative, which brings us to our second main theorem:

\begin{theorem} \label{norm}
	Let $0 \in M$ be a smooth real hypersurface in $\C^n$ with defining function $$ r = \mathrm{Re}\, z_1+ \sum_{j=1}^N |f_j(z_2, \dots, z_n)|^2,  $$ where $f_1, \ldots, f_N$ are holomorphic functions near $\mathrm{0}.$ Let the Levi rank at $\mathrm{0}$ be $q$ and of Catlin multitype at $\mathrm{0}$  $$\Lambda = (1, \underbrace{2, \dots, 2}_{q}, \lambda_{q+2}, \dots, \lambda_n),$$ where $2<\lambda_{q+2} \leq \cdots \leq \lambda_n < +\infty.$ Then for any boundary system at $\mathrm{0},$ 
	\[
	\mathfrak{B}_{n}(0) = \{ r_1, r_{q+2}, \dots, r_n; \ L_2, \dots, L_n \},
	\]
	there exists a holomorphic change of coordinates at $\mathrm{0}$ that preserves the multitype such that $\mathfrak{B}_n(0)$ becomes
	\[
	\tilde{\mathfrak{B}}_n(0) = \{ \tilde{r}_1, \tilde{r}_{q+2}, \dots, \tilde{r}_n;  \tilde{L}_2, \dots, \tilde{L}_n \}
	\] 
	and satisfies the normalization
	
	\begin{equation}
	\begin{split}
	\tilde r_k  &= \mathrm{Re}\, z_k + o(\lambda_k^{-1} ), \quad q +2 \leq k \leq n\\
	\tilde{L}_k &= \partial_{z_k}+ o(\lambda_k^{-1} ), \quad 2 \leq k \leq n
	\end{split}
	\end{equation}
	where the partial derivatives are counted with weight $-\lambda_k^{-1}.$
	
\end{theorem}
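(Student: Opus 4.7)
The plan is to construct the normalizing change of coordinates in two stages: first normalize the Levi-nondegenerate directions ($2 \leq k \leq q+1$) by diagonalizing the Levi form at $0$, then inductively normalize the Levi-degenerate directions ($q+2 \leq k \leq n$) through weighted-homogeneous holomorphic substitutions. What makes the second stage succeed for sum of squares domains, when it would fail for a general pseudoconvex domain, is the rigidity imposed on all antiholomorphic occurrences in $r$: each appears only through a conjugate $\bar f_j$. Combined with the author's earlier result \cite{Aidoo2022} that the weighted-homogeneous polynomial model of a sum of squares is itself a sum of squares, this blocks the torsion obstruction described in \cite{bdn}.

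For the first stage, the Levi form of $r = \mathrm{Re}\,z_1 + \sum_{j=1}^N |f_j|^2$ at $0$ is controlled by the rank-$q$ Jacobian of $F = (f_1,\ldots,f_N)$ at $0$. A linear change of coordinates in $(z_2,\ldots,z_n)$ together with a unitary recombination of $\{f_j\}$ brings $f_j$ to $z_j + O(|z|^2)$ for $j = 2,\ldots,q+1$, while the remaining $f_j$'s vanish to order at least two. Under this change, the given vector fields $L_2,\ldots,L_{q+1}$, being $(1,0)$ tangential lifts of Levi-nondegenerate directions, are adjusted by $\C$-linear combinations to match $\partial_{z_j} + o(1/2)$, yielding the normal form on the nondegenerate block.

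For the second stage, I argue by induction on $k \in \{q+2,\ldots,n\}$. Assume the normalization is in place through step $k-1$. Catlin's construction produces $r_k$ of weighted order $\lambda_k^{-1}$; its weight-$\lambda_k^{-1}$ homogeneous component takes the form $\mathrm{Re}\bigl(\alpha_k z_k + Q_k(z_2,\ldots,z_{k-1})\bigr) + R_k(z,\bar z)$, where $\alpha_k \neq 0$ by the multitype finiteness, $Q_k$ is a weighted-homogeneous holomorphic polynomial, and $R_k$ is the non-pluriharmonic remainder. After rescaling so that $\alpha_k = 1$, the substitution $z_k \mapsto z_k - Q_k$ absorbs the pluriharmonic correction without disturbing any earlier normalization, and the weight-$\lambda_k^{-1}$ component of the transformed $\tilde r_k$ becomes $\mathrm{Re}\,z_k + R_k$. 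In parallel I adjust $L_k$ by subtracting $\C$-linear combinations of $L_2,\ldots,L_{k-1}$ and antiholomorphic tangent fields $\bar L_j$, exploiting the tangency $L_k(r)|_M = 0$, to reach $\tilde L_k = \partial_{z_k} + o(\lambda_k^{-1})$.

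The main obstacle, and the heart of the proof, is to show that the remainder $R_k$ vanishes identically, which is the no-torsion claim in our setting. By \cite{Aidoo2022}, every weight-$\lambda_k^{-1}$ bihomogeneous mixed term in the model of $r$ arises from a cross pairing $\overline{\tilde f_\ell}\,\tilde f_m$ in the model's sum of squares form $\mathrm{Re}\,\tilde z_1 + \sum_\ell |\tilde f_\ell|^2$. The argument I anticipate is that the inductive normalization, combined with the Catlin selection criteria identifying $r_k$ with a specific iterated Lie derivative of $r$, forces each such cross pairing contributing to $R_k$ to be absorbable by a further weighted-homogeneous holomorphic modification that does not affect the lower-weight normalizations. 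Carrying out this absorption closes the induction and yields the asserted normal form of the boundary system.
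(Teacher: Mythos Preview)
You have correctly located the crux of the argument---showing that the non-pluriharmonic remainder $R_k$ vanishes---but the mechanism you propose for handling it is backwards. A genuinely non-pluriharmonic term of fixed weight cannot be ``absorbed by a further weighted-homogeneous holomorphic modification'': holomorphic substitutions $z_k\mapsto z_k - Q_k$ only move pluriharmonic pieces around, and the presence of an irremovable mixed term is exactly what torsion \emph{is}. So the sentence ``forces each such cross pairing contributing to $R_k$ to be absorbable'' is not a proof sketch but a restatement of the desired conclusion in language that, taken literally, is false. What must be shown is that $R_k=0$ outright, i.e.\ that the iterated derivative of the model defining $r_k$ is already holomorphic before any further change of variables.

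The paper accomplishes this by two concrete ingredients that your proposal omits. First, it invokes Corollary~1.2 of \cite{bdn} to place the model in a form containing a \emph{balanced sum of squares} $|z_2|^{2k_{22}}+\cdots+|z_2^{k_{n2}}\cdots z_n^{k_{nn}}|^2$; this supplies, for each $j$, an explicit monomial $f(j)=z_2^{k_{j2}}\cdots z_j^{k_{jj}}$ and hence an explicit operator $D^{\alpha_0^{f(j)}}\bar D^{\alpha^{f(j)}}$ computing the leading part of $r_j$. Second, Lemma~\ref{rshol} shows by a direct weight-and-multiindex argument that for \emph{any} monomial $f$ of weight $1/2$ in a generator $h_l$, and any other monomial $g$ of weight $1/2$ (in $h_l$ or elsewhere), one has $\bar D^{\alpha^f}\bar g\equiv 0$: either $g$ is missing a variable of $f$, or the weight constraint $\sum \alpha_{i_j}^f/\lambda_{i_j}=\sum \alpha_{i_j}^g/\lambda_{i_j}=1/2$ forces some exponent of $g$ below the corresponding exponent of $f$. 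Hence $D^{\alpha_0^{f(j)}}\bar D^{\alpha^{f(j)}}r_0$ is holomorphic, equal to $C_j z_j+\Psi_j$ with $\Psi_j$ independent of $z_j$, and a single substitution finishes the job. Your inductive framework is compatible with this, but without the balanced-sum-of-squares input and the monomial-level vanishing lemma you do not yet have a proof. (A minor separate issue: subtracting antiholomorphic fields $\bar L_j$ from $L_k$ would destroy its $(1,0)$ type, so that adjustment step should be reconsidered.)
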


Results from Theorem \ref{norm} will be crucial in our proof of Theorem \ref{equalm}. This work also constitute a part of a broader research motivated by a question posed by J. P. D'Angelo, namely how does the stratification of the boundary of any sum of squares domain by the multitype level sets look like? As a contribution towards this research program, the author in \cite{Aidoo2022} relies on the Kol\'a\v r algorithm for the multitype computation in \cite{martinIMRN} to give an ideal reformulation of the multitype computation when all its entries are finite. Hence, another motivation in this paper is to obtain an ideal reformulation of Catlin's machinery for the commutator multitype computation. This transition allows one to compute the commutator multitype from the corresponding ideal of holomorphic functions defining the domain. The reformulation also has the advantage of reducing significantly the level of difficulty in computing the commutator multitype of any sum of squares domain in $\C^n.$

The paper is organized as follows: Section 2 provides preliminary notions and definitions pertinent to the discussion in the subsequent sections. In section 3, a result which proves that the commutator multitype is crossterm invariant is given. This is an important result needed for restating Catlin's machinery in terms of the ideals of holomorphic functions. Section 4 introduces the notion of derivatives of the ideal of holomorphic functions for a sum of squares domain under the assumption that its corresponding commutator multitype can be computed. Section 5 presents an important lemma that establishes the absence of torsion in the boundary of any sum of squares domain. The proofs of the main theorems are given at the end of this section. In Section 6, an ideal restatement of Catlin's machinery for the commutator multitype computation is presented.

\section{Definitions and Notation}

We give some basic definitions related to the multitype introduced by Catlin in \cite{catlinbdry}. We shall also present some other related definitions as given in \cite{Aidoo2022}.

\begin{definition} \label{sosdom}
	Let $D \subset \C^n$ be a smooth bounded domain with boundary-defining function $r(z)$ defined by
	\begin{equation} \label{sos} 
	r(z) = 2 \text{Re}(z_{1}) + \sum_{j=1}^{N} |f_j(z_2, \dots, z_{n})|^2,
	\end{equation}
	where $f_j(z_2, \dots, z_{n})$ for all $j, \ 1 \leq j \leq N$ are holomorphic functions vanishing at the origin in $\C^{n}$. The domain $D \subset \C^n$ is called a \textit{sum of squares domain}. It is also referred as a \textit{special domain} (see \cite{kboundary}).

	Denote by $M \subset \C^{n}$ the hypersurface defined by $\{ z \in \C^{n} \ | \ r(z) = 0 \}.$
\end{definition}

\begin{definition} \label{modhyp}
	The \textit{model hypersurface} associated to $M$ at the origin is defined as
	\begin{equation} \label{mod}
	M_0 = \{z \in \C^{n} : \ p(z, \bar z) = 0 \}, 
	\end{equation}
	the zero locus of the homogeneous polynomial $p(z, \bar z)$ consisting of all monomials from the Taylor expansion of the defining function that have weight 1 with respect to the multitype weight. We call  $p(z, \bar z)$ the \textit{model polynomial}. 
\end{definition}

\begin{definition}
	A weight $\Lambda = (\lambda_1, \dots, \lambda_n)$ is an n-tuple of rational numbers with $1 \leq \lambda_j \leq +\infty$ satisfying:
	\begin{itemize}
		\item[i.] $\lambda_j \leq \lambda_{j+1}$ for $1 \leq j \leq n-1;$ 
		\item[ii.] For each $t$, either $\lambda_t = +\infty$ or there exists a sequence of nonnegative integers $a_1, \dots, a_t$ satisfying $a_j > 0$ for all $1 \leq j \leq t$ and
		\[\sum_{j=1}^t \frac{a_j}{\lambda_j} = 1.\]
	\end{itemize}
	
\end{definition}

\medskip

We shall denote by $\Gamma_n$ the set of all weights $\Lambda$ as defined above.
We define a lexicographic ordering on $\Gamma_n,$ i.e. for any given $\Lambda_1, \ \Lambda_2 \in \Gamma_n$ such that $\Lambda_1 = (\lambda_1, \dots, \lambda_n)$ and $\Lambda_2 = (\lambda_1^{'}, \dots, \lambda_n^{'})$, then $\Lambda_1 > \Lambda_2$ if for some $t$, $\lambda_j = \lambda_j^{'}$ for $j <t$ and $\lambda_t> \lambda_t^{'}$.

\begin{definition} \label{distg}
	Let $\Omega \in \C^n$ be a smooth domain with defining function $r$ and let $z_0$ be a point on the boundary $b\Omega.$ We say that the weight $\Lambda = (\lambda_1, \dots, \lambda_n)$ is \textit{distinguished} at $z_0$ if there exists holomorphic coordinates $(z_1, \dots, z_n)$ about $z_0$ such that 
	\begin{itemize}
		\item[i.] $z_0$ is mapped to the origin;
		
		\item[ii.] If $\sum_{j=1}^n \frac{\alpha_j + \bar{\beta}_j}{\lambda_j} <1,$ then $D^{\alpha} \bar{D}^{\bar{\beta}} r(0) =0,$ where $D^{\alpha} = \frac{\partial^{|\alpha|}}{\partial z_1^{\alpha_1} \cdots \partial z_n^{\alpha_n}}$ and  $\bar{D}^{\bar{\beta}} = \frac{{\partial}^{\bar{|\beta|}}}{{\partial} \bar{z}_1^{\bar{\beta}_1} \cdots {\partial} \bar{z}_n^{\bar{\beta}_n}}.$ 
	\end{itemize}
\end{definition}

\begin{remark} \label{distgrk}
	The property (ii) of Definition \ref{distg} can be interpreted as follows: there exist holomorphic coordinates about $z_0$ such that the boundary-defining function $r$ in the new coordinates is of the form 
	\begin{equation} \label{distgeq}
	r = \mathrm{Re}\, z_1+p(z_2, \dots, z_n, \bar{z}_2, \dots, \bar z_{n}) + o_{\Lambda}(1),
	\end{equation}
	where $p(z_2, \dots, z_n, \bar{z}_2, \dots, \bar z_{n})$ is a $\Lambda$-homogeneous polynomial of weighted degree 1 without pluriharmonic terms and $o_{\Lambda}(1)$ denotes a smooth function whose derivatives of weighted order less than or equal to 1 vanish at zero.
\end{remark}

We shall denote the set of all distinguished weights at $z_0$ by $\tilde{\Gamma}(z_0).$

\begin{definition} \label{defa}
	The \textit{multitype} $\mathfrak{M}(z_0)$ at $z_0$ is defined to be the smallest weight in lexicographic sense $\mathfrak{M}(z_0) = (m_1, \dots, m_n)$ such that $\mathfrak{M}(z_0) \geq \Lambda$ for every distinguished weight $\Lambda \in \tilde{\Gamma}(z_0).$
\end{definition}
We call the multitype $\mathfrak{M}(z_0)$ at $z_0$ finite, if the last entry $m_n< +\infty$. We also recall that each $m_i$ of the multitype satisfies the inequality $$ m_{n-q+1} \leq \Delta_{q}(M,z_0),$$
where $\Delta_{q}(M,z_0)$ is the D'Angelo q-type as defined in \cite{opendangelo, dangelo}. Thus, the last entry $m_n$ is bounded above by the D'Angelo 1-type $\Delta_{1}(M,z_0).$

\medspace 

\noindent Now, for every sum of squares domain $D$ as defined in Definition \ref{sosdom}, we associate the ideal of holomorphic functions $\left< \tilde{f}\right> = \left< z_1, f_1, \dots, f_N \right>.$ Since the multitype gives a refined measure of the vanishing order of the boundary-defining function $r,$ it is easy to see that the order of vanishing of $r$ in the normal direction is 1. Hence, we assign the weight 1 to the first entry of the multitype, i.e. $m_1 =1.$ We are most interested in the vanishing order of $r$ in the direction of the complex tangential variables $z_2, \dots, z_n.$ For the rest of the paper, unless stated otherwise, for any given sum of squares domain $D$ we will rather associate the ideal of holomorphic functions $\left< f\right> = \left< f_1, \dots, f_N \right>.$ Now, consider the following definitions as given in \cite{Aidoo2022}.

\begin{definition} \label{gradid}
	Let $f \in \C[z_2, \dots, z_n],$ be a polynomial in the variables $z_2, \dots, z_n$ with coefficients in $\C.$ We define the \textit{gradient ideal of f} as the ideal generated by the partial derivatives of $f:$
	\begin{equation} \label{gi1}
	\mathcal{I}_{grad}(f) = \left <\nabla f \right > = \left <\frac{\partial f}{\partial z_2}, \cdots, \frac{\partial f}{\partial z_n} \right >.
	\end{equation}
\end{definition}

\begin{definition} \label{gradid2}
	Given the ideal $\left<f\right> = \left<f_1, \dots, f_N\right> \subset \C[z_2, \dots, z_n],$ we define the \textit{Jacobian module of $f$} as 
	\begin{equation} \label{gi2}
	\mathfrak{J}_{\left<f\right>} = \left[\frac{\partial f}{\partial z_2}, \cdots, \frac{\partial f}{\partial z_n} \right],
	\end{equation}
	where each $\frac{\partial f}{\partial z_j}$ is a vector. $\mathfrak{J}_{\left<f\right>}$ is a module over the polynomial ring $\C[z_1, \dots, z_n].$
\end{definition}
We associate the complex Jacobian matrix $\text{J}(f)$ given by
\begin{equation} \label{ji}
\text{J}(f) = \begin{pmatrix}
\frac{\partial f_2}{\partial z_2}& \cdots & \frac{\partial f_N}{\partial z_2}\\
\vdots & \ddots & \vdots \\
\frac{\partial f_2}{\partial z_n} & \cdots & \frac{\partial f_N}{\partial z_n}
\end{pmatrix}
\end{equation}
to every Jacobian module $\mathfrak{J}_{\left<f\right>}.$ For each gradient ideal $\mathcal{I}_{grad}(f_i) = \left <\frac{\partial f_i}{\partial z_2}, \cdots, \frac{\partial f_i}{\partial z_n} \right >$ of the generator $f_i \in \C[z_1, \dots, z_n]$ of $\left<f\right>,$ we associate the $i$-th column of $\text{J}(f)$ for $1 \leq i \leq n.$

\section{Crossterm Invariance of Catlin's Commutator Multitype}

We refer the reader to \cite{catlinbdry} and \cite{bdn} for a thorough and detailed description of Catlin's machinery for the computation of the commutator multitype and the associated boundary systems. The commutator multitype, as defined in \cite{catlinbdry}, is a list of $n$ positive rational numbers, which are obtained via the commutator properties of a system of tangential holomorphic vector fields. We shall denote the commutator multitype by $\mathfrak{C}(z)$  and let $\mathfrak{C}(z) = (c_1, \dots, c_n).$

We seek to reformulate Catlin's machinery for the commutator multitype computation for a sum of squares domain in terms of the corresponding ideals of holomorphic functions defining the domain. This restatement in terms of ideals of holomorphic functions, however, can only hold if the commutator multitype at the origin is crossterm invariant. By crossterm invariance we mean that the value of the commutator multitype is unaltered with the inclusion or omission of crossterms from the expansion of the moduli squares of all generators of such domains. More particularly, we will show that each entry of the commutator multitype can always be realized by the modulus square of some monomial from the Taylor expansion of some generator of the sum of squares domain in $\C^n.$ For the purposes of the material discussed in this section, we shall recall some definitions and notation from \cite{catlinbdry}.

Let $\Omega$ be a smoothly bounded domain and let $z_0$ be a given point in the boundary $b\Omega = \{r=0\}.$ Denote by $r$ a smooth boundary-defining function and suppose that the Levi-form of $b\Omega$ at $z_0$ has rank equal to $q.$

\begin{definition} \label{catorder}
	Let $\mathscr{L} = \{ L^1, \ldots, L^l \}$ be a list of vector fields in $T^{1,0}b\Omega$ such that $L^j$ is one of  $\{L_{c_1}, \bar{L}_{c_1} \ldots, L_{c_t}, \bar{L}_{c_t} \}$ for some $c_j \in \Z_{>0}.$ Let $l_i$ denote the total number of times $L_i$ and $\bar{L}_i$ appear in $\mathscr{L}.$ We say $\mathscr{L}$ is \textit{ordered} (in the sense of Catlin) if
	
	\[
	L^j = L_{c_t} \ \ \text{or} \ \ \bar{L}_{c_t} \ \ \text{when} \quad 1  \leq j \leq l_{c_t}, \quad \text{and}
	\]
	
	\[
	L^j = L_i \ \ \text{or} \ \ \bar{L}_i \ \ \text{when} \quad 1 + \sum_{k=i+1}^{c_t} l_k \leq j \leq 	\sum_{k=i}^{c_t} l_k.
	\]
\end{definition}

\begin{definition}
	An ordered list $\mathscr{L} = \{ L^1, \ldots, L^l \}$ is said to be $\nu$-admissible if $l_{\nu} >0$ and $\mathscr{L}$ satisfies the inequality
	\[
	\sum_{k=q+2}^{\nu-1} \frac{l_k}{c_k} <1.
	\]
\end{definition}

\noindent  Let $T^{1,0}_{\nu}$ denote the subbundle of $T^{1,0} b\Omega$ defined by
\[
T^{1,0}_{\nu} = \{ L \ | \ L(r_k)=0,  \quad \text{and} \quad \partial \bar\partial r(L, \bar L_j)=0, \ j =2 , \dots, q+1 \},
\]
where $ k=1, q+2, \dots, \nu-1$ and $q+2 \leq \nu -1 < n.$ Let $l \in \Z$ with $l \geq 3$ and denote by $\mathscr{L}$ a given list of vector fields, $\mathscr{L} = \{L^1, \ldots, L^l\}$ such that there exists a nonvanishing vector field $L_{\nu} \in T^{1,0}_{\nu}.$ %such that $L^j,$ for every $j$ $1 \leq j \leq l,$ $L^j = L$ or $L^j=\bar L.$ 
Denote by  $\mathscr{L} \partial r$ the function
\begin{equation} \label{fxn}
\mathscr{L} \partial r(z) = L^1 \cdots L^{l-2} \partial r ([L^{l-1}, L^l])(z)
\end{equation}
for $z \in b\Omega.$ We know from Catlin's construction \cite{catlinbdry} that for every finite entry of the commutator multitype $\mathfrak{C}(z_0)$ at $z_0 \in b\Omega,$ there exists at least one $\nu$-admissible ordered list $\mathscr{L} = \{L^1, \ldots, L^l\}$ of minimal length $l \geq 3$ such that $\mathscr{L}\partial r(z_0) \neq 0$ and also satisfies the equation 
\begin{equation} \label{comm}
\sum_{k=q+2}^{\nu-1} \frac{l_k}{c_k} + \frac{l_{\nu}}{c(\mathscr{L})} =1.
\end{equation}
Here, $l_k$ is the number of times that both $L_k$ and $\bar{L}_k$ appear in the list $\mathscr{L}$ and $c(\mathscr{L})$ denotes the solution \eqref{comm}. The chosen list $\mathscr{L}$ satisfying the above properties is denoted by $\mathscr{L}_{\nu}.$ Since $\mathscr{L}_{\nu}$ is $\nu$-admissible, the solution $c(\mathscr{L})$ exists and $c(\mathscr{L}) \in \Q^{+}.$ Finally, we define the $\nu$-th commutator multitype entry as
\begin{equation} \label{infcomm}
c_{\nu} = \inf\{ c(\mathscr{L}) \ | \ \mathscr{L} \ \textit{is $\nu$-admissible, ordered and satisfies} \ \mathscr{L}\partial r(z_0) \neq 0\}.
\end{equation}
Now, we let $z_0=0.$ From \eqref{infcomm}, we see that every finite entry of the commutator multitype is the infimum of the set of numbers $c(\mathscr{L}),$ where $\mathscr{L}$ is $\nu$-admissible, ordered, and satisfies $\mathscr{L} \partial r(0) \neq 0.$ Thus, our goal here is to show that every number $c(\mathscr{L}_1)$ associated to some crossterm is bounded from below by some number $c(\mathscr{L}_2)$ associated to a square, where $\mathscr{L}_1$ and $\mathscr{L}_2$ are lists of vector fields each satisfying the condition given in \eqref{comm} and \eqref{infcomm}. Now, if the condition $\mathscr{L} \partial r(0) \neq 0$ holds, then it must hold for at least one monomial of the boundary defining function $r$ and hence $\mathscr{L} \partial (m)(0) \neq 0,$ for some monomial $m$ in $r.$ Denote by $\mathrm{X}(m)$ the number $c(\mathscr{L})$ associated to $m$ such that  $\mathscr{L}$ satisfies the needed conditions in \eqref{infcomm} and $\mathscr{L} \partial (m)(0) \neq 0.$ We will say that the list $\mathscr{L}$ is \textit{associated to} $m,$ each time we need to make reference to the monomial $m.$ We can also see that the number $\mathrm{X}(m)$ cannot be computed if every list $\mathscr{L}$ associated to $m$ satisfies $l_{\nu}=0$ or $\sum_{i=q+2}^{\nu-1} l_i /c_i \geq 1.$ Hence, we conclude that if $\mathscr{L}$ is not $\nu$-admissible, then $\mathrm{X}(m)$ cannot be computed. We now state the following:

\begin{lemma} \label{squares}
	Let $0 \in M$ be a smooth real hypersurface in $\C^n$ with defining function $$ r = \mathrm{Re}\, z_1+ \sum_{j=1}^N |f_j(z_2, \dots, z_n)|^2,  $$ where $f_1, \ldots, f_N$ are holomorphic functions near $\mathrm{0}$ and let the Levi rank be $q$ at the origin. Let $f$ and $g$ be monomials with nonzero coefficients from the Taylor expansion of the  generator $f_j,$ $1 \leq j \leq N,$ from $M.$ Assume that for some integer $\nu,$ $q+2 \leq \nu-1 < n,$ the entries $c_1, \ldots, c_{\nu-1}$ of $\mathfrak{C}(0),$ the commutator multitype at $\mathrm{0},$ have been obtained together with the functions $r_1, r_{q+2}, \ldots, r_{\nu-1},$ and the vector fields $L_2, \ldots, L_{\nu-1}$ in $\mathfrak{B}(0),$ the boundary system at $\mathrm{0}.$
	
	\begin{itemize}
		\item[(A).] Suppose that the numbers $\mathrm{X}(|f|^2), \mathrm{X}(|g|^2),$ and $\mathrm{X}(f \bar{g})$ can be computed. 
		
		\smallskip
		
		\begin{itemize}
			\item[(i).] If $\mathrm{X}(|f|^2) = \mathrm{X}(|g|^2),$ then $\mathrm{X}(f \bar{g}) = \mathrm{X}(|f|^2) = \mathrm{X}(|g|^2).$ 
			
			\smallskip
			
			\item[(ii).] If $\mathrm{X}(|f|^2) < \mathrm{X}(|g|^2),$ then $\mathrm{X}(|f|^2)  < \mathrm{X}(f \bar{g}) <  \mathrm{X}(|g|^2).$
		\end{itemize}

		\smallskip
		
		\item[(B).] Suppose that no $\nu $-admissible ordered list $\mathscr{L}^f$ satisfying $\mathscr{L}^f \partial (|f|^2)(0) \neq 0$ exists.
		
		\smallskip 
		
		\begin{itemize}
			
			\item[(i).] No $\nu$-admissible ordered list $\mathscr{L}$ exists such that the condition $\mathscr{L} \partial (f\bar{g})(0) \neq 0$ holds if no $\nu$-admissible ordered list $\mathscr{L}^g$ satisfying $\mathscr{L}^g \partial (|g|^2)(0) \neq 0$ exists.
			
			\smallskip
			
			\item[(ii).] If there exist a $\nu $-admissible ordered list $\mathscr{L}^g$ satisfying  $\mathscr{L}^g \partial (|g|^2)(0) \neq 0$ and a $\nu $-admissible ordered list $\mathscr{L}$ satisfying $\mathscr{L} \partial (f \bar{g})(0) \neq 0,$ then 
			$$ \mathrm{X}(|g|^2) \leq \mathrm{X}(f \bar{g}).$$ 
		\end{itemize}
		
	\end{itemize}
	
\end{lemma}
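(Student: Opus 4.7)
The plan is to extract $\mathrm{X}(m)$ for each of $|f|^2$, $|g|^2$, and $f\bar g$ by matching the differentiations in $\mathscr{L}$ to the exponents of $m$, and then read off (A) and (B) from the resulting algebra.

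First, I would identify the unique way in which $\mathscr{L}\partial(m)(0)$ can be nonzero for a tangential monomial $m=z^{\alpha}\bar z^{\beta}$ sitting in the Taylor expansion of some $|f_j|^2$. After invoking the normalization of Theorem \ref{norm}, so that $L_k=\partial_{z_k}+o(\lambda_k^{-1})$, applying $\mathscr{L}\partial$ to $m$ at the origin reduces, at top weighted order, to matching each holomorphic differentiation $L_k$ with one factor $z_k$ of $z^{\alpha}$ and each antiholomorphic $\bar L_k$ with one factor $\bar z_k$ of $\bar z^{\beta}$. This forces $l_k^{(1,0)}=\alpha_k$ and $l_k^{(0,1)}=\beta_k$, hence $l_k=\alpha_k+\beta_k$. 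Specializing, $l_k(|f|^2)=2\alpha_k$, $l_k(|g|^2)=2\beta_k$, and $l_k(f\bar g)=\alpha_k+\beta_k$ is the arithmetic mean of the two.

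Next, I would substitute these counts into the defining equation $\sum_{k=q+2}^{\nu-1} l_k/c_k+l_\nu/\mathrm{X}(m)=1$ for each of the three monomials and eliminate the partial sums $\sum_{k<\nu}\alpha_k/c_k$ and $\sum_{k<\nu}\beta_k/c_k$ between them. When $\mathrm{X}(|f|^2)$ and $\mathrm{X}(|g|^2)$ both exist this yields
\[
\frac{\alpha_\nu+\beta_\nu}{\mathrm{X}(f\bar g)} \;=\; \frac{\alpha_\nu}{\mathrm{X}(|f|^2)} + \frac{\beta_\nu}{\mathrm{X}(|g|^2)},
\]
which identifies $\mathrm{X}(f\bar g)$ as the $(\alpha_\nu,\beta_\nu)$-weighted harmonic mean of $\mathrm{X}(|f|^2)$ and $\mathrm{X}(|g|^2)$. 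Part (A)(i) is then the equal-argument case, and (A)(ii) is the strict betweenness of a weighted harmonic mean; strictness holds because $\nu$-admissibility of the two square lists requires $\alpha_\nu>0$ and $\beta_\nu>0$. For (B)(ii), the same manipulation with only the equations for $|g|^2$ and $f\bar g$ gives
\[
\frac{\alpha_\nu+\beta_\nu}{\mathrm{X}(f\bar g)} - \frac{\beta_\nu}{\mathrm{X}(|g|^2)} \;=\; \frac{1}{2} - \sum_{k=q+2}^{\nu-1}\frac{\alpha_k}{c_k},
\]
and the non-admissibility of the $|f|^2$-list forces the right-hand side to be $\leq 0$, which rearranges to $\mathrm{X}(|g|^2)\leq\mathrm{X}(f\bar g)$.

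For (B)(i) I would enumerate the failure modes of admissibility for the $|f|^2$- and $|g|^2$-lists, which by the matching above are $\alpha_\nu=0$ or $\sum_{k<\nu}2\alpha_k/c_k\geq 1$ and the analogue for $\beta$. The case $\alpha_\nu=\beta_\nu=0$ is immediate since then $l_\nu(f\bar g)=0$; the case in which both partial sums are $\geq 1$ follows by averaging. The mixed cases are where the real work sits, and here I would invoke the inductive fact, carried along through Catlin's construction, that the partial weight $(c_1,\ldots,c_{\nu-1},\infty,\ldots,\infty)$ has already been realized as a distinguished weight, so that any monomial $z^{\alpha}$ appearing in some $f_j$ with $\alpha_\nu=\cdots=\alpha_n=0$ automatically satisfies $\sum_{k<\nu}2\alpha_k/c_k\geq 1$. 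The main obstacle of the plan is the matching identification $l_k=\alpha_k+\beta_k$ in the first step: one must show that lower-weight corrections in the Catlin vector fields and the index-descending ordering of $\mathscr{L}$ cannot open additional routes to $\mathscr{L}\partial(m)(0)\neq 0$, and it is precisely here that Theorem \ref{norm} and the sum-of-squares form of $r$ do the decisive work.
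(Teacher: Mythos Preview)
Your target identity --- that $\mathrm{X}(f\bar g)$ is the weighted harmonic mean of $\mathrm{X}(|f|^2)$ and $\mathrm{X}(|g|^2)$ with weights $l_\nu^1,\,l_\nu^2$ --- is exactly what the paper derives in \eqref{3equations}--\eqref{Ai}, so from that point on your treatment of (A) and (B)(ii) matches the paper's. The difference lies in how you reach the identity, and your route has a real gap.

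You appeal to Theorem~\ref{norm} to write $L_k=\partial_{z_k}+o(\lambda_k^{-1})$ and then read off $l_k=\alpha_k+\beta_k$ by matching exponents. There are two problems. First, Theorem~\ref{norm} assumes the full Catlin multitype is finite ($\lambda_n<\infty$), whereas Lemma~\ref{squares} is stated with only the partial data $c_1,\dots,c_{\nu-1}$; the hypotheses do not line up, and in the paper's logical order the normalization theorem comes after this lemma. Second --- and you flag this yourself --- even granting normalization, the lower-weight remainders in the $L_k$ could in principle open alternative routes to $\mathscr{L}\partial(m)(0)\neq 0$, and you do not close that door.

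The paper bypasses both issues with a more elementary observation that requires no normalization at all: since $f$ is holomorphic and $\bar g$ antiholomorphic, every $(1,0)$ vector field in $\mathscr{L}$ acts only on $f$ and every $(0,1)$ vector field only on $\bar g$. One therefore splits $\mathscr{L}=\mathscr{L}_1^f\cup\mathscr{L}_2^{\bar g}$ by type, then symmetrizes to $\mathscr{L}_1=\mathscr{L}_1^f\cup\mathscr{L}_1^{\bar f}$ and $\mathscr{L}_2=\mathscr{L}_2^g\cup\mathscr{L}_2^{\bar g}$, which are automatically lists associated to $|f|^2$ and $|g|^2$ with $l_i^j=\hat l_i^j$ by construction. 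This yields \eqref{3equations} directly, regardless of the specific form of the $L_k$.

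For (B)(i) your mixed-case argument invokes the claim that $(c_1,\dots,c_{\nu-1},\infty,\dots,\infty)$ is already distinguished, and applies it to monomials $z^\alpha$ with $\alpha_\nu=\cdots=\alpha_n=0$. But the hypothesis ``no $\nu$-admissible $\mathscr{L}^f$ with $\mathscr{L}^f\partial(|f|^2)(0)\neq 0$'' does not force $\alpha_{\nu+1}=\cdots=\alpha_n=0$; it only says that within the available vector fields $L_{q+2},\dots,L_\nu$ either $l_\nu=0$ or the partial sum is $\geq 1$. The paper handles the mixed cases (its CASES 3 and 4) entirely at the level of the list counts $l_i^j$, arguing from the $\nu$-admissibility of $\mathscr{L}$ and the selection criteria for its vector fields that $\sum_{i<\nu} l_i^1/c_i=\tfrac12$ when $l_\nu^1=0$, without ever passing to exponents of $f$.
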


\begin{proof}
	
	We will show that the next commutator multitype entry $c_{\nu}$ can always be realized  by $\mathrm{X}(|m|^2),$ the computed value of some square of a monomial $m$ from the Taylor expansion of some generator of the sum of squares domain. Suppose that $f$ and $g$ are monomials in the Taylor expansion of the generator $f_j,$ $1 \leq j \leq N,$ and let $\mathscr{L}$ be a list of vector fields associated to the crossterm $f\bar g.$ 
	
	Let $\mathscr{L} = \mathscr{L}_1^f \cup \mathscr{L}_2^{\bar{g}}, $ where $\mathscr{L}_1^f$ is the list of all holomorphic vector fields in $\mathscr{L}$ associated to $f$ and $\mathscr{L}_2^{\bar{g}}$ is the list of all antiholomorphic vector fields in $\mathscr{L}$ associated to $\bar g.$ Denote by $\mathscr{L}_1^{\bar{f}}$ and $\mathscr{L}_2^g$  the lists of all corresponding conjugate vector fields in $\mathscr{L}_1^f$ and $\mathscr{L}_2^{\bar{g}}$ respectively. Thus, for any list $\mathscr{L}$ of vector fields associated to a given crossterm $f\bar g,$ we can always generate lists $\mathscr{L}_1 = \mathscr{L}_1^f \cup \mathscr{L}_1^{\bar{f}}$ and $\mathscr{L}_2 = \mathscr{L}_2^g \cup \mathscr{L}_2^{\bar{g}},$ which are associated to the square monomials $|f|^2$ and $|g|^2$ respectively. Next, denote by $l_i$ and $\hat{l}_i$ the number of times that the vector fields $L_i$ and $\bar{L}_i$ appear in the list $\mathscr{L}.$ Similarly, denote respectively by $l_i^j$ and $\hat{l}_i^j$ the number of times that $L_i$ and $\bar{L}_i$ appear in the list $\mathscr{L}_j.$ Then  $l_i^j = \hat{l}_i^j$ for all $i = q+2, \ldots, \nu$ and $j=1,2.$  Using the expression in \eqref{comm} together with this notation, the numbers $\mathrm{X}(|f|^2),$ $\mathrm{X}(|g|^2),$ and $\mathrm{X}(f \bar{g})$ can be expressed as 
	\begin{equation} \label{3equations}
	\frac{l_{\nu }^1}{\displaystyle\frac{1}{2} - \sum_{i=q+2}^{\nu-1} \frac{l_i^1}{c_i}}, \quad \frac{l_{\nu }^2}{\displaystyle\frac{1}{2} - \sum_{i=q+2}^{\nu-1} \frac{l_i^2}{c_i}}, \ \ \text{and} \quad \frac{l_{\nu} + \hat{l}_{\nu}}{\displaystyle 1 - \sum_{i=q+2}^{\nu-1} \frac{l_i + \hat{l}_i}{c_i}}
	\end{equation}
	respectively. Also, $l_{\nu} = l_{\nu }^1$ and $\hat{l}_{\nu } = \hat{l}_{\nu }^2$ in the last expression in \eqref{3equations}.
	
	\medskip
	
	\begin{itemize}
		\item[(A).] 	If the numbers $\mathrm{X}(|f|^2), \mathrm{X}(|g|^2),$ and $\mathrm{X}(f \bar{g})$ can be computed, then we can always find $\nu$-admissible ordered lists $\mathscr{L}, \mathscr{L}_1,$ and $\mathscr{L}_2$ satisfying the conditions $\mathscr{L} \partial (f \bar{g})(0) \neq 0,$ $\mathscr{L}_1 \partial (|f|^2)(0) \neq 0,$ and $\mathscr{L}_2 \partial (|g|^2)(0) \neq 0$ respectively. From the preceding argument, it is easy to see that such lists always exist.
		
		\medskip
		
		\item[(i).]  Assume that $\mathrm{X}(|f|^2) = \mathrm{X}(|g|^2).$ Then
		
		\begin{equation} \label{Ai}
		\begin{split}
		\mathrm{X}(f \bar{g}) & = \frac{l_{\nu }^1 + \hat{l}_{\nu}^2}{\displaystyle \frac{1}{2} - \sum_{i=q+2}^{\nu-1} \frac{l_i^1}{c_i} + \frac{1}{2} - \sum_{i= q+2}^{\nu-1} \frac{\hat{l}_i^2}{c_i}} \\
		& =  \mathrm{X}(|f|^2) \left( \frac{l_{\nu }^1 + \hat{l}_{\nu }^2}{l_{\nu }^1 + l_{\nu}^2}\right)  \quad \text{from the hypothesis and \eqref{3equations}} \\
		& = \mathrm{X}(|f|^2) = \mathrm{X}(|g|^2) \quad \text {since} \quad l_{\nu }^2 = \hat{l}_{\nu }^2.
		\end{split}
		\end{equation}
		
		\item[(ii).] Assume that $\mathrm{X}(|f|^2) < \mathrm{X}(|g|^2).$ By this assumption and \eqref{3equations}, the first line of \eqref{Ai} becomes 
		
		\begin{equation} \label{Aii}
		\begin{split}
		\mathrm{X}(f \bar{g}) & >  \mathrm{X}(|f|^2) \left( \frac{l_{\nu }^1 + \hat{l}_{\nu }^2}{l_{\nu }^1 + l_{\nu }^2}\right)  \quad \text{since} \quad \frac{1}{\mathrm{X}(|f|^2)} > \frac{1}{\mathrm{X}(|g|^2)} \\
		& = \mathrm{X}(|f|^2) \quad \text {since} \quad l_{\nu }^2 = \hat{l}_{\nu }^2 \quad \text{and} \\
		\mathrm{X}(f \bar{g}) & < \mathrm{X}(|g|^2) \quad \text{ by a similar argument as in lines 1 and 2 of \eqref{Aii}.}
		\end{split}
		\end{equation}
		
		\item[(B)(i).] Suppose that every list $\mathscr{L}_1$ associated to $|f|^2$ satisfies $l_{\nu}^1 =0$ or $\sum_{i=q+2}^{\nu-1} \frac{l_i^1}{c_i} \geq \frac{1}{2}$ and similarly, every list $\mathscr{L}_2$ associated to $|g|^2$ satisfies $l_{\nu}^2 =0$ or $\sum_{i=q+2}^{\nu-1} \frac{l_i^2}{c_i} \geq \frac{1}{2}.$ We break the remaining argument into 4 cases. Suppose that $\mathscr{L}_1$ and $\mathscr{L}_2$ are such that:
		
		\smallskip
		
		\begin{itemize}
			\item[CASE \large 1.] $l_{\nu}^1 = l_{\nu}^2 = 0.$ Clearly, the list $\mathscr{L}$ cannot be $\nu$-admissible since $l_{\nu }^1 + \hat{l}_{\nu}^2 =0.$
			
			\smallskip
			
			\item[CASE \large 2.] $\sum_{i=q+2}^{\nu-1} \frac{l_i^1}{c_i} \geq \frac{1}{2}$ and $\sum_{i=q+2}^{\nu-1} \frac{l_i^2}{c_i} \geq \frac{1}{2}.$ Then $\sum_{i=q+2}^{\nu-1} \frac{l_i^1 + \hat{l}_i^2}{c_i} \geq 1$ and so $\mathscr{L}$ is not $\nu$-admissible.
			
			\smallskip
			
			\item[CASE \large 3.] $l_{\nu }^1=0$ and $\sum_{i=q+2}^{\nu-1} \frac{l_i^2}{c_i} \geq \frac{1}{2}.$ Now, suppose that $\mathscr{L}$ is $\nu$-admissible and satisfies $\mathscr{L} \partial (f \bar{g})(0) \neq 0.$ Clearly, the list $\mathscr{L}_1$ satisfies $l^1_{\nu} = \hat{l}^1_{\nu} =0.$ By the assumption on $\mathscr{L}$ and the selection criteria of its vector fields, $\mathscr{L}_1$ satisfies $\sum_{i=q+2}^{\nu-1} \frac{l_i^1}{c_i} = \frac{1}{2}.$ Thus, $\sum_{i=q+2}^{\nu-1} \frac{l_i^1 + \hat{l}_i^2}{c_i} \geq 1$ contradicting the fact that 
			$\mathscr{L}$ is $\nu$-admissible.	
			
			\smallskip
			
			\item[CASE \large 4.] $l_{\nu}^2=0$ and $\sum_{i=q+2}^{\nu-1} \frac{l_i^1}{c_i} \geq \frac{1}{2}.$ The argument here follows similarly as in case 3 above.
		\end{itemize}
		
		\smallskip
		
		\item[(ii).] Suppose that $\mathscr{L}$ is $\nu$-admissible, ordered, and satisfies $\mathscr{L} \partial (f \bar{g})(0) \neq 0$ and that there exists a $\nu$-admissible, ordered list $\mathscr{L}_2$ satisfying $\mathscr{L}_2 \partial (|g|^2)(0) \neq 0.$ If no $\nu$-admissible, ordered list $\mathscr{L}_1$ exists such that the condition $\mathscr{L}_1 \partial (|f|^2)(0) \neq 0$ holds, then we shall break the argument into 2 cases.
		
		\begin{itemize}
			\item[CASE \large 1.] Suppose that every list $\mathscr{L}_1$ satisfies $\sum_{i=q+2}^{\nu-1} \frac{l_i^1}{c_i} \geq \frac{1}{2}.$ Then line 1 of \eqref{Ai} becomes 
			
			\begin{equation} \label{BiC1}
			\begin{split}
			\mathrm{X}(f \bar{g}) & \geq \frac{l_{\nu}^1 + \hat{l}_{\nu}^2}{\displaystyle \frac{1}{2} - \sum_{i= q+2}^{\nu-1} \frac{\hat{l}_i^2}{c_i}} \quad \text{since} \quad \frac{1}{2} - \sum_{i= q+2}^{\nu-1} \frac{\hat{l}_i^1}{c_i} \leq 0, \\
			& \geq \mathrm{X}(|g|^2) \quad \text{from \eqref{3equations} and the inequality} \quad \frac{l_{\nu }^1 + \hat{l}_{\nu}^2}{l_{\nu}^2} \geq 1.
			\end{split}
			\end{equation}
			
			\item[CASE \large 2.] Suppose that every list $\mathscr{L}_1$ satisfies $l_{\nu}^1 = \hat{l}_{\nu}^1 =0.$ By the assumptions on $\mathscr{L}$ and $\mathscr{L}_2$ as well as the selection criteria of vector fields in $\mathscr{L},$ the list $\mathscr{L}_1$ satisfies $\sum_{i=q+2}^{\nu-1} \frac{l_i^1}{c_i} = \frac{1}{2}.$ Then from \eqref{3equations} and the equation $\frac{1}{2} - \sum_{i= q+2}^{\nu-1} \frac{\hat{l}_i^1}{c_i} = 0,$ line 1 of \eqref{Ai} satisfies $\mathrm{X}(f \bar{g}) = \mathrm{X}(|g|^2).$
		\end{itemize}
	\end{itemize}
	
	This completes the proof of the lemma.
\end{proof}

\begin{remark} \label{crsrmk}
	It is easy to see that $\mathrm{X}(f \bar{g}) = \mathrm{X}(\bar{f} g).$ Also, as a consequence of Lemma \ref{squares}, the omission of all the crossterms from the expansion of the moduli squares of the generators does not alter the value of the commutator multitype.

\end{remark}

Lemma \ref{squares} enables us to compute the commutator multitype of a sum of squares domain from its associated ideal of holomorphic functions. As a result of this transition, we will give an ideal restatement of Catlin's machinery for the commutator multitype computation in section 6.

\section{Derivatives of the Ideal of Holomorphic Functions}

Let $\left< f \right> = \left< f_1, \dots, f_N \right>$	be the ideal of holomorphic functions associated to the sum of squares domain $D$ and denote by $\text{J}(f) \text{J}^{*}(f)$ the Levi matrix of $M,$ where $\text{J}(f)$ and $\text{J}^{*}(f)$ are the complex Jacobian matrix and its conjugate transpose respectively. We know that the commutator multitype at 0 is obtained via differentiation of the Levi form of M at 0 along certain selected directions. To each vector field $L_k= \sum_{j=1}^n v_{kj} \frac{\partial}{\partial z_j} \in T^{1,0}M,$ we associate a row vector $v_k = (v_{k1}, \ldots, v_{kn}).$ We set $r=r_1,$ where $r$ is the boundary-defining function associated to $D.$ Since $r_1$ defines the boundary of some manifold we associate the vector $v_1 = (1, 0, \dots, 0)$ to the vector field $L_1$ given that $L_1r_1=1$ is always satisfied. Denote by $\left[ \partial r_1\right]$ the $1 \times n $ matrix %given by 
\begin{equation} \label{normaldirection}
\left[ \partial r_1\right] :=
\begin{pmatrix}
\partial_{z_1}r_1 & \partial_{z_2}r_1 & \cdots & \partial_{z_n}r_1
\end{pmatrix}, 
\end{equation}
where $\partial_{z_i}:= \frac{\partial}{\partial z_i}$ and define the kernel of $\left[ \partial r_1\right]$ as
\[
\ker \left[ \partial r_1 \right] = \{v \ | \  \left[ \partial r_1 \right] v^{T} = 0\},
\]
where $v^{T}$ is the transpose of $v,$ the associated row vector to any vector field $L \in T^{1,0}M.$ So the condition $L_i(r_1) \equiv 0$ for all $i, \ 2 \leq i \leq n,$ given in \cite{catlinbdry} is equivalent to the condition that $v_i \in \ker \left[ \partial r_1 \right],$ where $v_i$ is the row vector associated to $L_i \in T^{1,0}M.$ Now, for every fixed $k,$ let $L_k  = \sum_{j=1}^n v_{kj}\frac{\partial}{\partial z_j}$ with $v_{kj}$ a holomorphic function for every $j$ be a vector field in $T^{1,0}M.$ We shall only consider such holomorphic vector fields to define the derivatives of the ideal of holomorphic functions. For the rest of the paper, unless otherwise specified, every holomorphic vector field $L_k$ can be viewed as a $1 \times (n-1)$ matrix by associating with the row vector $v_k = \left( v_{k2}, \ldots, v_{kn} \right)$ since the holomorphic functions $f_1, \dots, f_N$ do not depend on the variable $z_1$ and $T^{1,0}M = \mathrm{span} \left\{\frac{\partial}{\partial z_2}, \dots, \frac{\partial}{\partial z_n}  \right\}.$

Using the complex Jacobian matrix associated to the ideal of holomorphic functions $\left\langle  f\right\rangle,$ we will define the derivative of $\left\langle f\right\rangle $ along directions belonging to some ordered list of holomorphic vector fields in $T^{1,0}M.$ Recall that we are interested in the derivatives of the Levi form along certain vector fields. Now, let $L \in T^{1,0} M.$ In Catlin's notation \cite{catlinbdry}, the Levi form of M at a point 0 along the direction $L$ is given by $\partial \bar{\partial} r([L, \bar{L}])(0).$ For any given sum of squares domain, 
\begin{equation} \label{eqlevi}
\partial \bar{\partial} r([L, \bar{L}])(z) \ \equiv \  v \text{J}(f) \text{J}^{*}(f) v^{*}(z),
\end{equation}
where $v$ is the row vector associated to $L.$ A simple verification shows that \eqref{eqlevi} is likewise a sum of squares. Specifically, $v \text{J}(f) \text{J}^{*}(f) v^{*}(z)$ is a sum of squares with generators $Lf_j = \sum_{l=2}^n v_l (\partial f_j / \partial z_l),$ $j = 1, \ldots, N$ holomorphic near the origin. So the Levi form of a sum of squares domain in the direction $L$ has an ideal representation, which we denote by $\left\langle Lf \right\rangle:= \left\langle Lf_1, \ldots, Lf_N\right\rangle.$ Now, $L_t \partial \bar{\partial} r([L, \bar{L}])(z),$ the derivative of $\partial \bar{\partial} r([L, \bar{L}])(z)$ along the direction $L_t \in T^{1,0}M$ is equivalent to the form $v_t \text{J}(Lf) \text{J}^{*}(f) v^{*}(z),$ where $\text{J}(Lf)$ is the complex Jacobian matrix associated to the ideal $\left\langle Lf \right\rangle.$ Similarly, the derivative of $\partial \bar{\partial} r([L, \bar{L}])(z)$ along the direction $\bar{L}_t$ becomes $v \text{J}(f) \text{J}^{*}(Lf) v_t^{*}(z)$ and the derivative of the sum $\sum_{j=1}^N |f_j|^2$ in the direction $\bar{L}_t$ is given by 
\begin{equation} \label{1st step}
\begin{pmatrix} f_1 & \cdots & f_N  \end{pmatrix} \text{J}^{*}(f) v_t^{*}(z),
\end{equation}
where $\begin{pmatrix} f_1 & \cdots & f_N  \end{pmatrix}$ is the matrix associated to the ideal of holomorphic functions $\left\langle f\right\rangle .$ The form given in \eqref{1st step} will be useful in the model domain case of the ideal reformulation of the commutator multitype  in section 6. From Lemma \ref{squares}, we know that every finite entry of the commutator multitype at 0 can always be realized by the modulus square of a monomial in some generator of the ideal of holomorphic functions. Thus each entry of the commutator multitype can be realized by some list $\mathscr{L} = \{L^1, \ldots, L^l\}$ of vector fields where $l$ is even. Most importantly, there exists a list $\mathscr{L} = \{L^1, \ldots, L^l\}$  such that for every $t,$  the vector fields $L_t$ and $\bar{L}_t$ in $\mathscr{L}$ must appear an equal number of times. Using this fact together with the fact that $v \text{J}(f) \text{J}^{*}(f) v^{*}(z)$ is a sum of squares, we shall define the derivative of an ideal of holomorphic functions. We first note that $\left\langle Lf \right\rangle:= \left\langle Lf_1, \ldots, Lf_N\right\rangle,$ the ideal representation of the Levi form of M is precisely the first order derivative of the ideal $\left\langle f\right\rangle $ along the direction $L.$ We associate the $1 \times N$ matrix $v \text{J}(f) = \begin{pmatrix}
Lf_1 & \cdots & Lf_N
\end{pmatrix}$ to the ideal $\left\langle Lf\right\rangle .$ Before we define the higher order derivatives of the ideal $\left\langle f\right\rangle $ we consider the definition below:

\begin{definition} \label{equalorder}
	Let $\mathscr{L} = \{ L^1, \ldots, L^l \}$ be a list of vector fields in $T^{1,0}M$ such that $L^j$ is one of  $\{L_{c}, \bar{L}_{c} \ldots, L_{c+t}, \bar{L}_{c+t} \}$ for some $c, t \in \Z_{>0}$ and $L_i,$ $i= c, \ldots, c+t,$ is a holomorphic vector field in $T^{1,0}M.$ Suppose that $L_i$ and $\bar{L}_i$ appear an equal number of times in $\mathscr{L}$ and so each $l_i$ is even for $i= c, \ldots, c+t.$ Then $\mathscr{L}$ is said to be \textit{equally-ordered} if 
	
	\begin{equation} \label{veclist3}
	L^{2j-1} = L_{c+t} \ \ \text{and} \ \ L^{2j} = \bar{L}_{c+t} \ \ \text{when} \quad  1 \leq j \leq  \frac{1}{2}(l_{c+t}), \quad \text{and} 
	\end{equation}
	
	\begin{equation} \label{veclist4}
	L^{2j-1} = L_i \ \ \text{and} \ \ L^{2j} = \bar{L}_i \ \ \text{when} \quad \frac{1}{2} \left(2 + \sum_{k=i+1}^{c+t} l_k \right) \leq j \leq  \frac{1}{2} \left(\sum_{k=i}^{c+t} l_k \right).
	\end{equation}
\end{definition}

\begin{remark} \label{equalorderrk}
	Let $\mathscr{L} = \{ L^1, \ldots, L^l \}$ be an equally-ordered list of vector fields in $T^{1,0}M$ as in Definition \ref{equalorder}. Then we will denote by $\mathscr{L}_{\text{h}}= \{ L^1, \ldots, L^{s} \}$ for $s= \frac{l}{2},$ the ordered list (in the sense of Catlin) of holomorphic vector fields in the list $\mathscr{L}$ in Definition \ref{equalorder}.	
\end{remark}

Now, it is easy to see by a similar argument as in the case of the Levi form, that by differentiating $v \text{J}(f) \text{J}^{*}(f) v^{*}(z)$ along the vector fields in an equally-ordered list $\mathscr{L} =\{ L^1, \ldots, L^l \}$ an even number of times, we still obtain a function that is a sum of squares. More specifically, let $j , d \in \Z_{> 0}$ be such that $1 \leq j \leq d$ and $d = \frac{1}{2}\sum_{k=c}^{c+t} l_k.$ Then after differentiating $2j$ times $v \text{J}(f) \text{J}^{*}(f) v^{*}(z)$ along vector fields in $\mathscr{L}$ we obtain the function 
\begin{equation} \label{derofL}
v^m \text{J}(F^j) \text{J}^{*}(F^j) (v^m)^{*}(z),
\end{equation}
where $m = 2(d-j+1)-1,$ $\displaystyle F^j = \prod_{i=2}^{j} L^{2(d-j+i)-1}(Lf),$ and $v^m$ is the row vector associated to the vector field $L^m \in \mathscr{L}.$ It is easy to see that \eqref{derofL} is likewise a sum of squares. Let $\displaystyle \tilde{F}^j = \prod_{i=1}^{j} L^{2(d-j+i)-1}(Lf).$ Then to the ideal of holomorphic functions $\big\langle \tilde{F}^j \big\rangle,$ we associate the $1 \times N$ matrix  
\begin{equation} \label{derofL2}
v^m \text{J}\left(F^j\right) = \begin{bmatrix}
\tilde{F}^j_1 & \tilde{F}^j_2 & \cdots & \tilde{F}^j_N
\end{bmatrix},
\end{equation}
where $\tilde{F}^j_k$ is the derivative of the function $\tilde{F}^j$ with respect to the variable $z_k.$ This allows us to define the higher-order derivatives of the ideal $\left\langle f\right\rangle$ along vector fields in the list of holomorphic vector fields $\mathscr{L}_{\text{h}}.$ We will use exponents with parentheses to indicate the order of the derivative of any vector field in order to distinguish them from the usual indexing of vector fields.

\begin{definition}
	Let $L^j = \sum_{i=2}^n v_{ji} \frac{\partial}{\partial z_i}$ be a holomorphic vector field in $T^{1,0} M$ and let $\left< f\right> = \left< f_1, \dots, f_N \right>$ be an ideal of holomorphic functions. 
	
	\begin{itemize}
		\item[(i).] Let $\mathscr{L}_{\text{h}} = \{L^{1}, \ldots, L^{k}\}$ be an ordered list of holomorphic vector fields in $T^{1,0}M$. Then the kth order derivative of the ideal $\left < f \right>$ in the k directions in $\mathscr{L}_{\text{h}}$ is given by $$\left< (L^{1} \cdots L^{k}) f\right> := \left< (L^{1} \cdots L^{k}) f_1, \dots , (L^{1} \cdots L^{k}) f_N \right>.$$
		
		\item[(ii).] If the vector fields $L^j=L$ in (i) for some $L \in T^{1,0} M$, then we define the kth order derivative of the ideal $\left< f\right>$ in the direction of $L$ by
		$$  \left< L^{(k)} f\right> : = \left< L^{(k)} f_1, \dots, L^{(k)} f_N\right>.$$

	\end{itemize}
\end{definition}
\noindent By a slight abuse of notation, we will denote $\left< (L^{1} \cdots L^{k}) f\right>$ by $\left\langle  \mathscr{L}_{\text{h}} f\right\rangle$ and its corresponding complex Jacobian matrix by $\text{J}(\mathscr{L}_{\text{h}} f).$ In this notation, the order of the derivative of $\left\langle f\right\rangle $ is given by $\# \mathscr{L}_{\mathrm{h}},$ the cardinality of $\mathscr{L}_{\mathrm{h}}.$ 

\begin{remark} \label{jacob} 
	Let $\mathscr{L}^{'}_{\text{h}}:= \{L^{2}, \ldots, L^{k}\},$ $\mathscr{L}^{''}_{\mathrm{h}} := \{L^{3}, \ldots, L^{k}\}$ and note the following:
	
	\begin{itemize}
		\item[(i).]  $  \left\langle  \mathscr{L}_{\text{h}} f\right\rangle = \left< L^1 \cdots L^k f\right>$ is the ideal representation of the function $$ v^1 \mathrm{J}(\mathscr{L}_{\mathrm{h}}^{'}f) \mathrm{J}^{*}(\mathscr{L}_{\mathrm{h}}^{'}f) (v^1)^{*}(z).$$
		\item[(ii).] Suppose that $\mathscr{L}^{'}_{\mathrm{h}} = \emptyset.$ By convention, we set $\mathrm{J}(\mathscr{L}^{'}_{\mathrm{h}} f) = \text{J}(L^{(0)}f) = \mathrm{J}(f)$ and $ \ v \mathrm{J}(\mathscr{L}^{''}_{\mathrm{h}} f) = \begin{pmatrix} f_1& \cdots & f_N \end{pmatrix}.$
		\item[(iii).] We simply let $L^{(1)} f = L f$ when no confusion arises.
	\end{itemize}
\end{remark}

\medspace

\section{Normalizing the Boundary Systems}

We give the proofs of the main theorems in this section. The proof of Theorem \ref{equalm} relies on the normalization results in Theorem \ref{norm}. The proof of Theorem \ref{norm} uses results from Corollary 1.2 in \cite{bdn} and Lemma \ref{rshol} below. For easy reference, we state without proof Corollary 1.2 as given in \cite{bdn} as Corollary \ref{corinbnd} below:

\begin{corollary}[\cite{bdn}, pg. 3] \label{corinbnd}
	Let $M$ be a pseudoconvex smooth real hypersurface in $\C^n$ with $0 \in M$ and of the Catlin multitype $$ \Lambda = (1, \lambda_2, \dots, \lambda_n), \quad \lambda_n < \infty$$ at $0.$
	
	Then there exists a holomorphic change of coordinates at $\mathrm{0}$ preserving the multitype so that the defining function for $M$ in the new coordinates is given by \[
	r = 2\mathrm{Re} z_1 + p(z_2, \dots, z_n, \bar z_1, \dots, \bar z_n) +o_{ \Lambda^{-1}}(1),
	\]
	where $p$ is a polynomial consisting of all terms of weight $1$ with respect to the Catlin multitype at $0$ and also contains the balanced sum of squares 
	\[
	|z_2|^{2k_{22}}+ |z_2|^{2k_{32}} |z_3|^{2k_{33}} + \cdots + |z_2|^{2k_{n2}} \cdots |z_n|^{2k_{nn}},
	\]
	with $k_{jj} >0$ for all $j$ and the degree of $p$ in each $(z_j, \bar z_j)$ is not greater than $2k_{jj}.$
\end{corollary}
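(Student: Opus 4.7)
The plan is to establish the balanced sum of squares structure via iterative normalization, leveraging pseudoconvexity and the extremality properties of the Catlin multitype. I would begin with distinguished coordinates adapted to $\Lambda$ at $0$, in which the defining function takes the form $r = 2\mathrm{Re}(z_1) + p(z_2, \ldots, z_n, \bar z_2, \ldots, \bar z_n) + o_{\Lambda^{-1}}(1)$, where $p$ is a $\Lambda$-weighted homogeneous polynomial of weight $1$ without pluriharmonic terms (cf.\ Remark \ref{distgrk}). Pseudoconvexity of $M$ translates into plurisubharmonicity of the model polynomial $p$.

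The core of the argument is an induction on $j = 2, \ldots, n$, producing at each stage a balanced monomial $|z_2|^{2k_{j2}} |z_3|^{2k_{j3}} \cdots |z_j|^{2k_{jj}}$ in $p$ with $k_{jj} > 0$, whose weighted degree sums to $1$. For the base case $j = 2$: the minimality of $\lambda_2$ as a distinguished weight forces a term $c_{22} |z_2|^{2k_{22}}$ with $c_{22} > 0$ and $2k_{22} = \lambda_2$, since otherwise one could perturb the coordinates to lower $\lambda_2$, contradicting the definition of the multitype. For the inductive step, suppose the required monomials have been realized for indices $i < j$. One examines the lowest weight-$1$ terms of $p$ that involve $z_j$; by the minimality of $\lambda_j$, such a term must exist. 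A coordinate change of the shape $z_j \mapsto z_j + Q(z_2, \ldots, z_{j-1})$, with $Q$ weighted-homogeneous of the appropriate weight, is used to clear any pluriharmonic contribution that could cancel the desired square, and plurisubharmonicity of $p$ then forces a pure square term of the form $|z_2|^{2k_{j2}} \cdots |z_j|^{2k_{jj}}$ with positive coefficient.

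The main obstacle will be the bookkeeping required to ensure that the coordinate change at step $j$ does not destroy the balanced monomials constructed at earlier stages $i < j$. This is resolved by restricting the admissible transformations at step $j$ to those fixing $z_1, \ldots, z_{j-1}$ and altering only $z_j$ by a polynomial in the preceding variables; such transformations preserve every monomial not involving $z_j$. The degree bound $\deg_{(z_j, \bar z_j)} p \leq 2k_{jj}$ then follows from weighted homogeneity: any monomial in $p$ containing a factor $(z_j \bar z_j)^m$ must have $2m/\lambda_j \leq 1 = 2k_{jj}/\lambda_j$. Preservation of the multitype throughout the procedure is a consequence of the fact that each coordinate change used is weighted-homogeneous with respect to $\Lambda$, so the class of distinguished weights at $0$ is invariant under the whole construction.
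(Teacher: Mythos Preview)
The paper does not prove this corollary; it is quoted verbatim from \cite{bdn} and used as a black box, so there is no in-paper argument to compare against. Your outline is broadly in the spirit of the original \cite{bdn} proof, but two places in your sketch are genuine gaps rather than mere abbreviations.

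First, in both the base case and the inductive step you assert that extremality of the multitype together with plurisubharmonicity ``forces a pure square term'' of the balanced form, but you never explain the mechanism. For $j=2$ this is still manageable (restrict $p$ to the $z_2$-line and use that a subharmonic weighted-homogeneous function of one variable is a nonnegative multiple of $|z_2|^{\lambda_2}$, then argue that the coefficient cannot vanish without contradicting the multitype). For $j>2$ it is substantially harder: one has to identify, among all weight-$1$ monomials involving $z_j$, a specific one of the shape $|z_2|^{2k_{j2}}\cdots|z_j|^{2k_{jj}}$ and show its coefficient is strictly positive. This is the heart of the \cite{bdn} argument and cannot be waved through; a coordinate change $z_j\mapsto z_j+Q(z_2,\dots,z_{j-1})$ alone does not produce it.

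Second, your derivation of the degree bound is incorrect. Weighted homogeneity of $p$ gives only that the total degree in $(z_j,\bar z_j)$ of any monomial is at most $\lambda_j$, since a monomial $z_j^a\bar z_j^b\cdot(\text{rest})$ has weight contribution $(a+b)/\lambda_j\le 1$. But for $j>2$ the balanced exponent satisfies $2k_{jj}<\lambda_j$ whenever some $k_{ji}>0$ for $i<j$, so the inequality you need, $a+b\le 2k_{jj}$, is strictly stronger than what homogeneity yields. In \cite{bdn} the degree bound comes from a specific \emph{choice} of the balanced monomials (roughly, maximizing $k_{jj}$ at each stage subject to the weight constraint), not from homogeneity alone.
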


Before we give the proof of the complete normalization of the boundary system for a sum of squares domain, we give an answer to question (i) in the introduction -namely,  can torsion occur in the boundary of any sum of squares domain in $\C^n?$ The ensuing lemma gives a negative answer to this question by establishing that no torsion can occur in the boundary. Recall that by Theorem 1.1 in \cite{Aidoo2022},  the model polynomial of a sum of squares domain is likewise a sum of squares. Hence, we state the following:

\begin{lemma} \label{rshol}
	Let $M_0 \subset \C^n \ n \geq 2$ be a model hypersurface of a model sum of squares domain $D_0$  defined by $\{r_0 < 0\},$ where
	\[
	r_0 = 2\mathrm{Re} z_1 + \sum_{l=1}^N|h_l(z_2, \dots, z_n)|^2,
	\]
	and $h_l$ is a polynomial consisting of all terms from the Taylor expansion of $f_l$ of weight $1/2$ with respect to the Catlin multitype at $0,$
	\[
	\Lambda = \left(1, \lambda_2, \lambda_3, \dots, \lambda_n\right), \quad \lambda_n < + \infty.
	\]	
	For any monomial $f$ with nonzero coefficients from the Taylor series expansion of the generator $h_l$ of $r_0,$ for $ 1 \leq l \leq N,$ the function
	\begin{equation}\label{holeq}
	F(z, \bar z) = D^{\alpha_0^f} \bar{D}^{\alpha^f} r_0,
	\end{equation}
	is holomorphic. Here, $\alpha^f=(\alpha^f_{i_1}, \dots, \alpha^f_{i_k})$ for some $k \leq n,$  %$\hat{\alpha}^f=(\hat{\alpha}^f_{i_1}, \dots, \hat{\alpha}^f_{i_k})$	
	is the multiindex corresponding to the monomial $f$ %and $\bar f$ respectively, 
	and $\alpha_0^f= (\alpha^f_{i_1}-1, \alpha^f_{i_2} \dots, \alpha^f_{i_k}).$ 
\end{lemma}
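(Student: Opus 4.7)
My plan is to exploit the $\Lambda$-weighted homogeneity of the generators $h_l$ to show that the antiholomorphic dependence in $D^{\alpha_0^f}\bar D^{\alpha^f} r_0$ collapses to a constant. First I would write
\[
r_0 \;=\; z_1 + \bar z_1 + \sum_{l=1}^{N} h_l(z)\,\overline{h_l(z)},
\]
and observe that the pluriharmonic term $z_1 + \bar z_1$ is annihilated by any mixed derivative $D^{\alpha_0^f}\bar D^{\alpha^f}$, since $\alpha^f$ is a nonzero multiindex supported on $z_2, \ldots, z_n$. Because each $h_l$ is holomorphic in $z$ and $\bar h_l$ is antiholomorphic, one has $\bar D^{\alpha^f}(h_l \bar h_l) = h_l \cdot \overline{D^{\alpha^f} h_l}$, and then $D^{\alpha_0^f}$ differentiates only the holomorphic factor, giving
\[
F(z,\bar z) \;=\; \sum_{l=1}^{N} \overline{D^{\alpha^f} h_l}\;\cdot\; D^{\alpha_0^f} h_l .
\]

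The key step is to show $\overline{D^{\alpha^f} h_l}$ is a constant for each $l$, which I would deduce by tracking weights. By the definition of $h_l$ as the part of $f_l$ consisting of all monomials of $\Lambda$-weight $1/2$, the polynomial $h_l$ is weighted homogeneous of weight $1/2$. The given monomial $f$ appears in $h_l$ with nonzero coefficient, so its multiindex necessarily satisfies $\sum_j \alpha^f_j/\lambda_j = 1/2$. Since $\partial_{z_j}$ lowers the $\Lambda$-weight by $1/\lambda_j$, the operator $D^{\alpha^f}$ lowers the weight of $h_l$ by exactly $1/2$, so $D^{\alpha^f} h_l$ is weighted homogeneous of weight zero in $z_2, \ldots, z_n$. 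As every $z_j$ carries a strictly positive weight $1/\lambda_j$, the only weighted homogeneous polynomials of weight zero are the scalars, so $D^{\alpha^f} h_l \in \C$ for each $l$.

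Combining these observations, $F(z,\bar z)$ is a finite $\C$-linear combination of the holomorphic polynomials $D^{\alpha_0^f} h_l$ and is therefore holomorphic, proving the lemma. There is no substantive technical obstacle beyond this bookkeeping; the essential content is the observation that $\alpha^f$ necessarily has $\Lambda$-degree exactly $1/2$, which forces $D^{\alpha^f} h_l$ to be constant. This is where both the sum-of-squares structure and the weight-$1/2$ selection of the generators $h_l$ are essential — an analogous statement would fail for a general weighted homogeneous $r_0$ of weight $1$ lacking a square-root decomposition of the non-pluriharmonic part.
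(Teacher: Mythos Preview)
Your proof is correct and, in fact, more streamlined than the paper's. The paper expands each $|h_l|^2$ into individual monomial products $|f|^2$, $|g|^2$, $f\bar g$, $\bar f g$ and then performs a case analysis on the exponents of $f$ and $g$ (using the weight-$1/2$ constraint) to show that $\bar D^{\alpha^f}\bar g \equiv 0$ whenever $g$ is a monomial distinct from $f$; this is done separately for monomials coming from the same generator and from different generators. You bypass this case analysis entirely by observing that the weighted homogeneity of $h_l$ forces $D^{\alpha^f}h_l$ to have weight zero and hence to be a scalar, which immediately makes $\overline{D^{\alpha^f}h_l}$ a constant coefficient in front of the holomorphic $D^{\alpha_0^f}h_l$. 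Both arguments rest on the same weight bookkeeping, but yours applies it at the level of the polynomial $h_l$ rather than monomial by monomial. The paper's route has the minor advantage of directly yielding the explicit form $F = K z_{i_1} + \Psi_{i_1}$ with $\Psi_{i_1}$ independent of $z_{i_1}$ (used downstream in the proof of Theorem~\ref{norm}); your argument recovers this too, since the only monomial of $h_l$ surviving $D^{\alpha^f}$ is $f$ itself, but you would need to add one sentence to make that explicit if the remark following the lemma is required.
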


\begin{proof}
	The normalization argument in Lemma 6.2 of \cite{bdn} amounts to showing that taking one fewer holomorphic differentiation on the model polynomial than necessary to obtain a non-zero quantity in the computation of the commutator multitype yields a holomorphic function. Showing that $F(z, \bar{z})$ is holomorphic accomplishes the same for a sum of squares domain in this lemma.
	
	Let $f$ and $g$ be distinct monomials with nonzero coefficients from the Taylor series expansion of some generator $h_l$ of $r_0$ for $l$ fixed with $1 \leq l \leq N.$ If there are variables in $f$ not present in $g,$ then immediately $\bar{D}^{\alpha^f} \bar{g} =0,$ so we only need to consider monomials $g$ such that 
	\begin{equation}\label{fxns}
	f=C_f z_{i_1}^{\alpha_{i_1}^f} \cdots z_{i_k}^{\alpha_{i_k}^f} \quad \text{and} \quad g=C_g z_{i_1}^{\alpha_{i_1}^g} \cdots z_{i_k}^{\alpha_{i_k}^g} z_{i_{k+1}}^{\alpha_{i_{k+1}}^g} \cdots z_{i_c}^{\alpha_{i_c}^g}
	\end{equation}
	for some $c, k$ satisfying $k\leq c \leq n$ where $\alpha^g=(\alpha^g_{i_1}, \dots, \alpha^g_{i_c})$ is the  multiindices corresponding to $g$ and $C_f, \, C_g \in \C.$
	Also, let $|\alpha^f|=\alpha_{i_1}^f+ \cdots + \alpha_{i_k}^f,$ $|\alpha^g|=\alpha_{i_1}^g+ \cdots + \alpha_{i_k}^g,$ and $|\alpha^f_0|=\alpha_{i_1}^f+\alpha_{i_2}^f + \cdots + \alpha_{i_k}^f-1.$ From the expansion of $|h_l|^2$ we obtain the terms $|f|^2,$ $|g|^2,$ and $\mathrm{2Re}(f\bar g)$ in $r_0$ satisfying 
	\begin{equation} \label{1steq}
	\sum_{j=1}^k \frac{\alpha_{i_j}^f}{\lambda_{i_j}} = \frac{1}{2}, \quad  \sum_{j=1}^c \frac{\alpha_{i_j}^g}{\lambda_{i_j}} = \frac{1}{2} \quad \text{and} \quad \sum_{j=1}^k \frac{\alpha_{i_j}^f}{\lambda_{i_j}} + \sum_{j=1}^c \frac{\alpha_{i_j}^g}{\lambda_{i_j}} = 1
	\end{equation}
	respectively. To show that the function $F(z, \bar z)$ in \eqref{holeq} is holomorphic, it suffices to show that 
	$\bar{D}^{\alpha^f} \bar g \equiv 0$ and $\bar{D}^{\alpha^f} \bar u \equiv 0,$ where $u$ is any distinct monomial from the Taylor series expansion of a generator different from $h_l$ in $r_0.$ Now, if for some $j, \ 1 \leq j \leq k,$ the inequality $\alpha_{i_{j}}^f > \alpha_{i_{j}}^g$ holds, then it is easy to see that $\bar{D}^{\alpha^f} \bar g \equiv 0.$ Thus, we only need to examine monomials $f$ and $g$ as given in \eqref{fxns} such that for every $j,$ $1 \leq j \leq k,$ $\alpha_{i_{j}}^f \leq \alpha_{i_{j}}^g.$ 
	We shall now break the argument into two cases:
	
	\begin{itemize}
		\item[CASE 1.] Suppose that $k = c$ in the definitions given in \eqref{fxns}. Then due to the weight restrictions in \eqref{1steq},  $\alpha_{i_j}^f=\alpha_{i_j}^g$ for all $j=1,2, \dots, k$ or $\alpha_{i_j}^f \neq \alpha_{i_j}^g$ for some $j, \ 1 \leq j \leq k.$ Note that we cannot have  $\alpha_{i_j}^f < \alpha_{i_j}^g$ for all $j, \ 1 \leq j \leq k,$ since this will violate the fact that both $|f|^2$ and $|g|^2$ are in the model. Now, we consider the following:
		
		\smallskip
		
		\begin{itemize}
			\item[i.] First, suppose that for all $j=1,2, \dots, k,$ $\alpha_{i_j}^f=\alpha_{i_j}^g.$ Then $f = g$ contradicting the assumption that $f$ and $g$ are distinct.
			
			\item[ii.]  Next, if $\alpha_{i_j}^f < \alpha_{i_j}^g$ for some $j,$ then $\alpha_{i_{j^{'}}}^f > \alpha_{i_{j^{'}}}^g$ for some other $j^{'}$ because otherwise $f$ and $g$ cannot both have weight $\frac{1}{2}$ as seen in \eqref{1steq}. Therefore,  $\bar{D}^{\alpha^f} \bar g \equiv 0.$
			
		\end{itemize}
		
		\smallskip
		
		\item[CASE 2.]	Suppose that $k < c$ in the definitions given in \eqref{fxns}. Then $\alpha^g_{i_j} \neq 0$ for $j= k+1, \dots, c$ so $\alpha^g_{i_j} < \alpha^f_{i_j}$ for at least one $j,$ $1 \leq j \leq k$ since $f$ and $g$ have equal weight $\frac{1}{2}$ as seen in \eqref{1steq}. Hence, $\bar{D}^{\alpha^f} \bar g \equiv 0.$ 
	\end{itemize}
	
	Now, let $u$ be any distinct monomial from the Taylor series expansion of a generator different from $h_l$ in $r_0$ and suppose that $u$ and $f$ are distinct. Then since $f$ and $g$ are also distinct monomials, it follows that by arguing in the same manner with $u$ replacing $g,$ we can conclude that $\bar{D}^{\alpha^f} \bar u \equiv 0.$ Consequently, for any given pair of distinct monomials $f$ and $g$ from $r_0,$  the corresponding square $|g|^2$ and crossterm $f\bar g$ vanish identically after applying the operator $D^{\alpha_0^f} \bar{D}^{\alpha^f}$ to $r_0.$ Therefore, applying the operator $D^{\alpha_0^f} \bar{D}^{\alpha^f}$ to $|f|^2$ and $\bar f g$ gives $Az_{i_1}$ and $C\tilde{g}$ respectively, where $A, C \in \C$ and $\tilde{g}=D^{\alpha_0^f}g$ is holomorphic. Finally, 
	\begin{equation}\label{final}
	F(z, \bar z) = D^{\alpha_0^f} \bar{D}^{\alpha^f} r_0 = K z_{i_1} + \Psi_{i_1}, 
	\end{equation}
	where $\Psi_{i_1}= \sum_{l=1}^m K_l D^{\alpha_0^f} \psi_l,$ $m \in \Z_{> 0},$ $K, K_l \in \C, \ l=1, \dots, m,$ and $\psi_l $ is a holomorphic monomial from some generator of $\sum_{l=1}^N |h_l|^2$ such that $\psi_l$ and $f$ are distinct. Due to the weight restrictions given in \eqref{1steq}, if $\psi_l \not\equiv 0,$ then $\psi_l$ has weight $1/2.$ $F(z, \bar z)$ is holomorphic as required.
	
\end{proof}

\begin{remark} \label{holrmk}
	The term $\Psi_{i_1}$ in \eqref{final} is independent of the variable $z_{i_1}$ due to weight restrictions.
\end{remark}

\medspace

As hinted in the introduction of this section, the proof of our first main theorem heavily relies on Corollary \ref{corinbnd} and Lemma \ref{rshol} above. Armed with these tools we now give the proof of Theorem \ref{norm}.

\bigskip

\noindent {\bf Proof of Theorem \ref{norm}.} Let $M$ be a sum of squares hypersurface in $\C^n$ with defining function
\[
r = 2\mathrm{Re} z_1 + \sum_{l=1}^N|f_l(z_2, \dots,z_n)|^2,
\]
where $f_l(z_2, \dots,z_n)$ is a holomorphic function near the origin and assume that the rank of the Levi form of $M$ at 0 is $q.$ From Corollary \ref{corinbnd} there exists a holomorphic change of coordinates at 0 preserving the multitype so that the defining function $r$ in the new coordinates is given by
\[
r = 2\mathrm{Re} z_1 + p(z_2, \dots, z_n, \bar z_1, \dots, \bar z_n) +o_{ \Lambda^{-1}}(1),
\]
where $p$ is a polynomial consisting of all terms of weight $1$ with respect to the Catlin multitype at $0$ and also contains the balanced sum of squares 
\begin{equation} \label{bsos}
|z_2|^{2k_{22}}+ |z_2|^{2k_{32}} |z_3|^{2k_{33}} + \cdots + |z_2|^{2k_{n2}} \cdots |z_n|^{2k_{nn}},
\end{equation}
with $k_{jj} >0$ for all $j$ and the degree of $p$ in each $(z_j, \bar z_j)$ is not greater than $2k_{jj}.$ Note that since $M$ is a sum of squares domain, all holomorphic changes of coordinates will take squares to squares. The proof here reduces to proving that at the level of the model hypersurface $r_0=0,$  where
\[
r_0 = 2\mathrm{Re} z_1 + p(z_2, \dots, z_n, \bar z_1, \dots, \bar z_n),
\]
for $q+2 \leq j \leq n,$ the real-valued function $r_j$ in the boundary system $\mathfrak{B}_n(0)$ can be transformed into the form $\mathrm{Re}\, z_j$ via a holomorphic polynomial change of coordinates. Via a Chern-Moser type
argument \cite{CM}, we assume that the Levi rank at the origin is $q = 0.$ Also, from \cite{Aidoo2022} we know that $p$ is a sum of squares and so let $p=\sum_{l=1}^N|h_l(z_2, \dots, z_n)|^2,$ where $h_l$ a holomorphic function of weighted degree 1/2. We proceed further by rewriting the balanced sum of squares in \eqref{bsos} as 
\begin{equation} \label{bsos2}
|z_2^{k_{22}}|^2+ |z_2^{k_{32}} z_3^{k_{33}}|^2 + \cdots + |z_2^{k_{n2}} z_3^{k_{n3}}\cdots z_n^{k_{nn}}|^2
\end{equation}
and applying Lemma \ref{rshol} to the defining function $r_0$ as follows: Denote by $f(j)$ the monomial given by 
\[
f(j) = z_2^{k_{j2}} z_3^{k_{j3}} \cdots z_j^{k_{jj}}, \quad \text{for all} \quad 2 \leq j \leq n,
\] 
and let $\alpha_0^{f(j)} = (k_{j2}, k_{j3}, \dots, k_{jj}-1)$ and $\alpha^{f(j)} = (k_{j2}, k_{j3}, \dots, k_{jj}).$ Note here that the definition of the multiindices $\alpha_0^{f(j)}$ still holds since $k_{jj} >0.$ From Lemma \ref{rshol},
\begin{equation} \label{finalnorm}
D^{\alpha_0^{f(j)}} \bar{D}^{\alpha^{f(j)}} r_0 = C_j z_j + \Psi_j, \quad \text{for all} \quad 2 \leq j \leq n,
\end{equation}
where $\Psi_j$ is a holomorphic function. From Remark \ref{holrmk}, $\Psi_j$ is independent of $z_j$ for each $j,$ $2 \leq j \leq n,$ and so we can make a suitable holomorphic change of coordinates. Hence, for each fixed $j,$ $2 \leq j \leq n,$ we obtain an equally ordered list $\mathscr{L}$ given by
\[
L_j: = \partial_{z_j} + p_{z_j} \partial_{z_1}, \quad \mathscr{L} = \{\underbrace{L_j, \bar L_j, \dots, L_j, \bar L_j}_{2k_{jj}}, \dots, \underbrace{L_2, \bar L_2, \dots, L_2, \bar L_2}_{2k_{22}} \},
\]
and 
\[
\mathscr{L}{'} = \{\underbrace{\bar L_j, \dots, L_j, \bar L_j}_{2k_{jj}-1}, \dots, \underbrace{L_2, \bar L_2, \dots, L_2, \bar L_2}_{2k_{22}} \}.
\]
Hence we compute $r_j$ as 
\begin{equation} \label{finalnorm2}
r_j = \mathscr{L}^{'}\partial r \sim \mathrm{Re} D^{\alpha_0^{f(j)}} \bar{D}^{\alpha^{f(j)}} r_0 \sim \mathrm{Re}(C_j z_j + \Psi_j).
\end{equation}
Finally, we make a holomorphic change of coordinates that preserves $\Lambda:$ $z_l \to \tilde z_l$ for $1 \leq l \leq n,$ where $\tilde z_l = C_l z_l + \Psi_l$ for $l=j,$ and $\tilde z_l = z_l$ for $l \neq j.$ Thus, in the new coordinates (after normalizing the constants to 1), the real-valued function $r_j$ in \eqref{finalnorm2}  becomes
\begin{equation} \label{finalnorm3}
r_j = \mathrm{Re}\, z_j,
\end{equation}
for all $2 \leq j \leq n.$ 
\begin{flushright} 
	\qed
\end{flushright}	

\bigskip

We shall prove Theorem \ref{equalm} by establishing the two inequalities $\mathfrak{M}(0) \leq \mathfrak{C}(0)$ and $\mathfrak{M}(0) \geq \mathfrak{C}(0).$ The proof of the first inequality is easy and is also given in \cite{catlinbdry} Theorem 3.7, for a general pseudoconvex domain in $\C^n$. We are therefore interested in showing that $\mathfrak{M}(0) \geq \mathfrak{C}(0)$ for any sum of squares domain. The proof of the second inequality for a general pseudoconvex domain is also given by Catlin in \cite{catlinbdry} via a simultaneous inductive argument, but the details of the proof are very difficult to read. Hence, for any given domain of sum of squares of holomorphic functions, we shall present a very simple proof of the second inequality using results from Theorem \ref{norm}.

\bigskip

\noindent {\bf Proof of Theorem \ref{equalm}.} Let $0 \in M$ be a smooth real hypersurface in $\C^n$ with defining function $$ r = \mathrm{Re}\, z_1+ \sum_{j=1}^N |f_j(z_2, \dots, z_n)|^2,  $$ where $f_1, \ldots, f_N$ are holomorphic functions near $\mathrm{0}$ and suppose that $\mathfrak{B}_n(0)$ is the boundary system defined near the origin. Let $\mathfrak{C}(0)$ be the commutator multitype at 0 associated to $\mathfrak{B}_n(0)$ and  suppose that both $\mathfrak{C}(0)$ and $\mathfrak{M}(0),$ the multitype at 0, are finite. 

We will show that $\mathfrak{M}(0) \geq \mathfrak{C}(0).$ By definition, the multitype is the infimum of all distinguished weights and so $\mathfrak{M} \geq \Lambda,$ where $\Lambda \in \tilde{\Gamma},$ the set of all distinguished weights. Therefore, it suffices to show that $\mathfrak{C}(0)$ is a distinguished weight. From Corollary \ref{corinbnd} applied in the proof of Theorem \ref{norm}, there exists a holomorphic change of coordinates that transforms the defining function $r$ into the form given in \eqref{distgeq} in Remark \ref{distgrk}. The complete normalization result from Theorem \ref{norm} ensures that there exists a holomorphic change of coordinates, which maps $(z_1, \dots, z_n)$ to the origin and also transforms $r$ into the form in \eqref{distgeq}. Hence, the weight $\mathfrak{C}(0)$ at 0, generated from $\mathfrak{B}_n(0)$ is a distinguished weight. This completes the proof of Theorem \ref{equalm}.
\begin{flushright} 
	\qed
\end{flushright}
\medspace

\section{Ideal Restatement of the Commutator multitype machinery}

We reformulate Catlin's technique for the commutator multitype computation in terms of the ideal of holomorphic functions associated to the sum of squares domain $D.$
Recall that for any given ideal of holomorphic functions $\left< f \right> = \left< f_1, \dots, f_N \right>$  associated to $D,$  $\text{J(f)} \text{J}^{*}(f)$ is the Levi matrix of $M,$ where $\text{J}(f)$ and $\text{J}^{*}(f)$ are the complex Jacobian matrix and its conjugate transpose respectively. Also, recall that for every vector field $L_j \in T^{1,0}M$ we associate the row vector $v_j,$ $1 \leq j \leq n,$ where $v_1 = (1,0, \dots, 0).$ The reader should note that if any vector field
is denoted by $L^j,$ then its corresponding row vector will also be denoted by $v^j.$ From \eqref{normaldirection}, the $1 \times n $ matrix $\left[ \partial r_1\right]$ is given by $\left[ \partial r_1\right] :=
\begin{pmatrix}
\partial_{z_1}r_1 & \partial_{z_2}r_1 & \cdots & \partial_{z_n}r_1
\end{pmatrix}.$ In general, we will denote by $\left[ \partial r_k\right]$ the matrix 
\[
\left[ \partial r_k\right]:=\begin{pmatrix}
\partial_{z_1}r_k & \partial_{z_2}r_k & \cdots & \partial_{z_n}r_k
\end{pmatrix},
\]
for some $k \in \Z^{+},$ $r_k$ is some real-valued function and $\partial_{z_i}:= \frac{\partial}{\partial z_i}.$

\subsection{The Commutator Multitype Machinery: Ideal Version} 

Let $D$ be a sum of squares domain and let $z_0$ be a point in $M.$ We assume here that the entries of both the multitype and the commutator multitype are not necessarily finite. Let $r$ be a smooth boundary-defining function and suppose that the rank of the Levi form of $M$ at $z_0$ equals $q.$ Let $\left< f\right> = \left< f_1, \dots, f_N \right>$ be an ideal of holomorphic functions associated to $D$ and let $\mathfrak{C}(z) = (c_1, \dots, c_n)$ be the commutator multitype. Set $r =r_1.$

The number $c_1$ is defined by $c_1=1,$ which corresponds to the weight in the $z_1$-direction. Here, $c_1$ is obtained from the condition $\left[ \partial r_1 \right] v_1^{T},$ where $v_1=(1,0, \dots, 0)$ is the vector associated to the vector field $L_1 = \frac{\partial}{\partial z_1} \in T^{1,0}M.$ Since the rank of the Levi form of $M$ at $z_0$ is $q$ we get $c_2= \cdots = c_{q+1}=2.$ Hence, we choose holomorphic vector fields $L_2, \dots, L_{q+1}$ such that $v_j \in \ker \left[ \partial r_1 \right] $ and such that the $q \times q$ matrix
\begin{equation} \label{nonsing}
V \text{J}(f)\text{J}^{*}(f) V^{*}(z_0) \end{equation} 
is non-singular, where $V=[v_{kj}],$ $2 \leq k \leq q+1, \ 2 \leq j \leq n,$ is a $q \times (n-1)$ matrix whose $l$th row is $v_l= (v_{l2} \cdots v_{ln}).$ We denote by $\mathscr{L}_{\mathrm{h}_{\text{max}}}$  the set of the vector fields $L_2, \dots, L_{q+1},$ which is a maximal collection satisfying the condition in \eqref{nonsing}. The construction is done if $q+1=n;$ otherwise, we continue the procedure in the following two steps:

\medspace

\begin{itemize}
	\item[\bf{STEP 1:}] Let $T^{1,0}_{q+2}$ denote the subbundle of $T^{1,0} M$ defined by
	\[
	T^{1,0}_{q+2} = \{ L \ | \ v \in \ker \left[ \partial r_1 \right] \ \text{and} \  v \text{J}(f)\text{J}^{*}(f) v_j^{*}=0, \ j=2 , \dots, q+1 \},
	\]
	where $v$ is the row vector associated to $L \in T^{1,0}M.$ At this point, we recall Definition \ref{equalorder} and Remark \ref{equalorderrk} given in section 4. Let $s$ denote an integer with $s \geq 2,$  $\mathscr{L} = \{L^1, \dots, L^{l}\}$ an equally-ordered list, and  $\mathscr{L}_h = \{L^1, \dots, L^{s}\}$  its associated holomorphic list. Let $l_k$ be the number of times both vector fields $L_k$ and $\bar{L}_k$ appear in $\mathscr{L}$ and let $s_k$ be the number of times that $L_k$ appear in $\mathscr{L}_h.$ Then $l_k=2s_k.$
	
	Now, suppose that $\mathscr{L}_{h} =\{L^1, \dots, L^s\}$ is a list of holomorphic vector fields such that there exists a nonvanishing vector field $L \in T^{1,0}_{q+2}$ such that $L^j=L$ for every $j,$ $1 \leq j \leq s$ and let $\mathscr{L}^{'}_{h}=\{L^2, \dots, L^s\}.$ Denote by $\mathfrak{F}(z)$ the function
	\[
	\mathfrak{F}(z) = v \text{J}(\mathscr{L}^{'}_hf) \text{J}^{*}(\mathscr{L}^{'}_hf) v^{*} (z)
	\]
	for $z \in M,$ where $\text{J}^{*}(\mathscr{L}_{h}^{'}f)$ and $v^{*}$ are the conjugate transposes of $\text{J}(\mathscr{L}_{h}^{'}f)$ and  $v$ respectively. If $\mathfrak{F}(z_0) = 0$ for all such lists, then we set $c_i = \infty$ for $i = q+2, \ldots, n.$ Let $\mathscr{L}_{h} = \{L^1, \dots, L^s\}$ be a list  of holomorphic vector fields that satisfies the condition that $\mathfrak{F}(z_0) \neq 0$ and  denote by $\mathfrak{h}_{\tau}(z)$ the function
	\begin{equation} \label{mmm}
	\mathfrak{h}_{\tau}(z) = v^2 \text{J}(\mathscr{L}^{''}_hf) \text{J}^{*}(\mathscr{L}^{'}_hf) (v^1)^{*} (z),
	\end{equation}
	where $v^j$ is the row vector associated to the vector field $L^j \in \mathscr{L}_{h}(\tau).$ If $\mathfrak{F}(z_0) \neq 0$ for some list $\mathscr{L}_{h},$ then choose one denoted by $\mathscr{L}_{h}(q+2)$ for which $s$ is the smallest and set $c_{q+2} = 2s,$ where $\mathscr{L}_{h}(q+2)= \{L^1, \dots, L^s\}.$ Let $\mathscr{L}^{'}_{h}(q+2)= \{L^2, \dots, L^s\}$ and set $r_{q+2} = \mathrm{Re}\,\mathfrak{h}_{q+2}$ or $r_{q+2} = \mathrm{Im}\,\mathfrak{h}_{q+2}.$ We also denote by $L_{q+2}$ the particular vector field $L$ used to obtain $\mathscr{L}_{h}(q+2).$  This ensures that the condition $L_{q+2}r_{q+2} \neq 0$ holds.
	
	Next, if $q+2=n,$ then the construction terminates; otherwise, we find the subsequent entries of the commutator multitype by following a slightly modified version of the first step. 
	
	\medspace
	
	\item[\bf{STEP \large 2:}] Assume that for $\nu-1 \in \Z,$ $q+2 \leq \nu-1 < n,$ the positive rational numbers $c_1, \ldots, c_{\nu},$ vector fields $L_2, \ldots, L_{\nu},$ and functions $r_j,$ $j=1, q+2, \ldots, \nu,$ have been constructed.  Now, we define a $\nu$-admissible list as follows:
	
	\begin{definition} \label{admseol}
		Let $\mathscr{L}_h = \{L^1, \dots, L^{s}\}$ be an ordered list of holomorphic vector fields. We say that $\mathscr{L}_h$	is $\nu$-admissible if $l_{\nu}>0$ and $\mathscr{L}_h$ satisfies the inequality
		\[
		\sum_{k=q+2}^{\nu-1} \frac{s_k}{c_k} <\frac{1}{2}.
		\]
	\end{definition}
	
	\medspace
	
	\noindent We proceed with the argument by affirming that the following properties also hold from the inductive hypothesis: For every $j,$ $q+2 \leq j \leq \nu -1$
	
	\smallskip

	\begin{itemize}
		\item[(i).] The list $\mathscr{L}_h(j) = \{L^1, \ldots, L^s\}$ is ordered and $j$-admissible.
		
		\item[(ii).] If $\mathscr{L}_h(j) = \{L^1, \ldots, L^s\}$ is an ordered list, $s^j_k$ equals the number of times $L_k$ occur in $\mathscr{L}_h(j),$  and $\displaystyle \sum_{k=q+2}^{j-1} \frac{s_k^j}{c_k} <\frac{1}{2},$ then $\mathfrak{F}(z_0) =0.$ Hence, $\mathscr{L}_h(j)$ is of minimal length.
		
		\item[(iii).] $\mathscr{L}_h(j)(z_0) \neq 0$ 
		
		\item[(iv).] If $\mathscr{L}_h(j) = \{L^1, \ldots, L^s\},$ then the condition $L_j r_j \neq 0$ holds where $\mathscr{L}^{'}_h(j) = \{L^2 , \ldots, L^s\}$ and $r_j = \text{Re}\,\mathfrak{h}_j$ or $r_j = \text{Im}\,\mathfrak{h}_j$ (we are using the notation in \eqref{mmm}). 
		
		\item[(v).] $L_j r_k =0$ if $q+2 \leq k < j \leq \nu.$

		\item[(vi).] If $s_k^j$ equals the number of times $L_k$ occur in $\mathscr{L}_h(j),$ then $s_k^j = 0$ whenever $k > j$ and $\displaystyle \sum_{k=q+2}^{j-1} \frac{s_k^j}{c_k} =\frac{1}{2}.$
		
	\end{itemize}
	
	\medspace
	
	\noindent	Next, denote by $T^{1,0}_{\nu}$ the subbundle of $T^{1,0} b\Omega$ defined by
	\[
	T^{1,0}_{\nu} = \{ L \ | \ v \in \ker \left[ \partial r_k \right] \ \text{and} \  v \text{J}(f)\text{J}^{*}(f) v_j^{*}=0, \ j =2 , \dots, q+1 \},
	\]
	for $ k=1, q+2, \dots, \nu-1.$ We compute the positive rational number $c_{\nu}$ by considering $j=\nu$ in properties (i) and (iii)-(v) above. We consider all ordered $\nu$-admissible list $\mathscr{L}_h$ for every holomorphic vector field $L_{\nu} \in T^{1,0}_{\nu}.$ If $\mathfrak{F}(z_0) =0$ for all such lists $\mathscr{L}_h,$ then we set $c_{\nu} = \cdots = c_n = \infty.$ If $\mathfrak{F}(z_0) \neq 0,$ then we choose one with minimal length and denote it by $\mathscr{L}_h(\nu),$ where $\mathscr{L}_h(\nu) = \{L^1, \dots, L^s\}.$ Let $s_k$ be the number of times that $L_k$ appear in the list $\mathscr{L}_h(\nu)$ and let $c(\mathscr{L})$ denote the solution to
	\begin{equation} 
	\sum_{q+2}^{\nu-1} \frac{s_k}{c_k} + \frac{s_{\nu}}{c(\mathscr{L})} =\frac{1}{2}.
	\end{equation}
	Since $\mathscr{L}_h(j)$ is $\nu$-admissible, the solution $c(\mathscr{L}) \in \Q^{+}.$ Let $\mathscr{L}_h^{'}(\nu) = \{L^2, \dots , L^s\}.$ Set $c_{\nu} = c(\mathscr{L}_h(\nu))$ and $r_{\nu} = \text{Re}\,\mathfrak{h}_{\nu}$ or $r_{\nu} = \text{Im}\,\mathfrak{h}_{\nu}$ so that  $L_{\nu} r_{\nu} \neq 0$ is satisfied. Also, the list $\mathscr{L}_{\nu}$ satisfies properties (i)-(vi). The process terminates after the last entry is obtained. Thus, this procedure generates  $\mathfrak{C}(z_0) = (c_1, \dots, c_n)$ the commutator multitype at $z_0.$ Let $\nu \leq n$ be the largest index such that $c_{\nu} < \infty.$ We call the collection 
	\[
	\mathfrak{B}_{\nu}(z_0) = \{r_1, r_{q+2}, \ldots, r_{\nu}; \ L_2, \ldots, L_{\nu} \}
	\]
	of real-valued functions and vector fields  a \textit{boundary system} of rank $q$ and codimension $n - \nu.$
\end{itemize}

\subsection{The Model Domain Case.} We shall consider the Catlin's machinery for the case where the domain in question is $D_0,$ the model of a sum of squares domain $D.$ Assume that all the entries of both the multitype and the commutator multitype are finite. Let $M_0 \subset \C^n \ n \geq 2$ be a model hypersurface of a model sum of squares domain $D_0$  defined by $\{r_0 < 0\},$ where
\[
r_0 = 2\mathrm{Re} z_1 + \sum_{l=1}^N|w_l(z_2, \dots, z_n)|^2,
\]
and $w_l$ is a polynomial of weight $1/2$ with respect to the Catlin multitype at $0.$ Assume that the rank of the Levi form of $M_0$ is $q$ and let $z_0$ be a point in $M_0.$ Here, we will adopt a slightly different approach to obtaining the holomorphic vector fields in the maximal collection $\mathscr{L}_{h_{\text{max}}}.$ We will derive holomorphic functions $\mathfrak{h}_2, \ldots, \mathfrak{h}_{q+1}$ whose mode of construction will be consistent with methods used to obtain the functions $\mathfrak{h}_k$ for $k = q+2, \ldots, n$ in ideal reformulation of the Catlin commutator multitype machinery in subsection 6.1 above. Let $\left< w\right> = \left<w_1, \dots, w_N \right>$ be the ideal of holomorphic functions associated to the domain $D_0.$ The holomorphic functions $\mathfrak{h}_2, \ldots, \mathfrak{h}_{q+1}$ are constructed as follows: We denote by $W_t(z)$ the function 
\[
W_t(z) = v_t \text{J}(w) \text{J}^{*}(w)v_t^{*}(z),
\]
where $v_t$ is the row vector associated to the (1,0) vector field $L_t,$ for some $t \in \Z_{> 0},$ and $\text{J}^{*}(w)$ is the conjugate transpose of $\text{J}(w).$ Since the rank of the Levi form is $q,$ we can always find (1,0) vector fields $L_j \in \mathscr{L}_{h_{\text{max}}},$ $j = 2, \dots, q+1,$ such that the function $\mathfrak{h}_j(z)$ given by 
\[
\mathfrak{h}_j(z) = \begin{pmatrix} w_1 & \cdots & w_N \end{pmatrix} \text{J}^{*}(w) v_j^{*}(z),
\]
where $v_j$ is the row vector associated to the vector field $L_j,$ is holomorphic. Also, each $L_j$ satisfies the conditions that $W_j(z_0) \neq 0$ and $L_j \in \ker \left[ \partial r_1\right].$  The condition $W_j(z_0) \neq 0$ can be interpreted for each vector field $L_j \in \mathscr{L}_{h_{\text{max}}},$ $2 \leq j \leq q+1,$ as $v_j \text{J}(\mathfrak{h}_j)(z_0) \neq 0.$ It is also easy to see that we can always choose the vector fields $L_j$ and $L_k$ for $2 \leq j< k \leq q+1,$ so that the condition $v_k\text{J}(\mathfrak{h}_j)(z_0) = 0$ holds. Denote by $\mathscr{W}(z)$ the function
\[
\mathscr{W}(z) = v \text{J}(\mathscr{L}^{'}_hw) \text{J}^{*}(\mathscr{L}^{'}_hw) v^{*} (z)
\]
for $z \in M,$ where $\text{J}^{*}(\mathscr{L}_{h}^{'}w)$ and $v^{*}$ are the conjugate transposes of $\text{J}(\mathscr{L}_{h}^{'}w)$ and  $v$ respectively.
By applying Corollary \ref{corinbnd} and Lemma \ref{rshol} to $r_0,$ we can always find a list $\mathscr{L}_{h}(\tau) = \{L^1, \dots, L^s\},$ $q+2 \leq \tau \leq n,$ of holomorphic vector fields such that $\mathscr{W}(z_0) \neq 0$ holds and such that the function
\[
\mathfrak{h}_{\tau}(z) = v^2 \text{J}(\mathscr{L}^{''}_hw) \text{J}^{*}(\mathscr{L}^{'}_hw) (v^1)^{*} (z)
\]
is holomorphic. Here $v^j$ is the row vector associated to the vector field $L^j \in \mathscr{L}_{h}(\tau).$ Continuing in a similar manner as in the construction given in section 6.1, we  obtain the following: For every minimal list $\mathscr{L}_{h}(\tau) = \{L^1, \dots, L^s\},$ $q+2 \leq \tau \leq n,$  such that $\mathscr{W}(z_0) \neq 0$ holds, we get
\[
c_{\tau} = c(\mathscr{L}_h(\tau)) \quad \text{and} \quad r_{\tau} = \text{Re}\,\mathfrak{h}_{\tau} \ \ \text{or} \ \ r_{\tau} = \text{Im}\,\mathfrak{h}_{\tau} 
\]
so that  $L_{\tau} r_{\tau} \neq 0$ is satisfied. Since $\mathfrak{h}_{\tau}(z)$ is holomorphic, the condition that $L_{\tau} r_{\tau} \neq 0$ is equivalent to the condition 
\begin{equation} \label{fin}
v_{\tau} \text{J}(\mathfrak{h}_{\tau})(z_0) \neq 0,
\end{equation}
where $\text{J}(\mathfrak{h}_{\tau})$ is the complex Jacobian matrix of $\mathfrak{h}_{\tau}$ and $v_{\tau}$ is the row vector associated to the vector field $L_{\tau}.$ Also, $v_{j} \text{J}(\mathfrak{h}_{\tau})(z_0) = 0,$ if $j > \tau.$ Finally, we shall consider an ideal generated by all the constructed holomorphic functions. Now, let $\nu \leq n$ be the highest index such that $c_{\nu} < \infty.$ Then for any boundary system $\mathfrak{B}_{\nu},$ we denote by  $\mathcal{I}_{\mathfrak{B}_{\nu}}$ the ideal 
\[
\mathfrak{h} = \left< \mathfrak{h}_2, \dots, \mathfrak{h}_{\nu}\right>
\]
generated by the holomorphic functions $\mathfrak{h}_j,$ $j=2, \dots, \nu.$ We shall call $\mathcal{I}_{\mathfrak{B}_{\nu}}$ \textit{the boundary-system ideal.}

\bibliography{Article2}

\begin{thebibliography}{Cat84b}

\bibitem[Aid22]{Aidoo2022}
Nicholas Aidoo.
\newblock On the {C}atlin multitype of sums of squares domains.
\newblock {\em J. Geom. Anal.}, 32(5):Paper No. 155, 43, 2022.

\bibitem[BG77]{bloomgraham}
Thomas Bloom and Ian Graham.
\newblock A geometric characterization of points of type {$m$} on real
  submanifolds of {${\bf C}^{n}$}.
\newblock {\em J. Differential Geometry}, 12(2):171--182, 1977.

\bibitem[BNZ18]{bdn}
Alexander Basyrov, Andreea~C. Nicoara, and Dmitri Zaitsev.
\newblock Sums of squares in pseudoconvex hypersurfaces and torsion phenomena
  for {C}atlin's boundary systems, 2018.

\bibitem[BS92]{bs}
Harold~P. Boas and Emil~J. Straube.
\newblock On equality of line type and variety type of real hypersurfaces in
  {${\bf C}^n$}.
\newblock {\em J. Geom. Anal.}, 2(2):95--98, 1992.

\bibitem[Cat84a]{catlinbdry}
David Catlin.
\newblock Boundary invariants of pseudoconvex domains.
\newblock {\em Ann. of Math. (2)}, 120(3):529--586, 1984.

\bibitem[Cat84b]{C84}
David Catlin.
\newblock Global regularity of the {$\bar \partial $}-{N}eumann problem.
\newblock In {\em Complex analysis of several variables ({M}adison, {W}is.,
  1982)}, volume~41 of {\em Proc. Sympos. Pure Math.}, pages 39--49. Amer.
  Math. Soc., Providence, RI, 1984.

\bibitem[Cat87]{C87}
David Catlin.
\newblock Subelliptic estimates for the {$\overline\partial$}-{N}eumann problem
  on pseudoconvex domains.
\newblock {\em Ann. of Math. (2)}, 126(1):131--191, 1987.

\bibitem[CM74]{CM}
S.~S. Chern and J.~K. Moser.
\newblock Real hypersurfaces in complex manifolds.
\newblock {\em Acta Math.}, 133:219--271, 1974.

\bibitem[D'A82]{opendangelo}
John~P. D'Angelo.
\newblock Real hypersurfaces, orders of contact, and applications.
\newblock {\em Ann. of Math. (2)}, 115(3):615--637, 1982.

\bibitem[D'A93]{dangelo}
John~P. D'Angelo.
\newblock {\em Several complex variables and the geometry of real
  hypersurfaces}.
\newblock Studies in Advanced Mathematics. CRC Press, Boca Raton, FL, 1993.

\bibitem[Koh72]{kboundary}
J.~J. Kohn.
\newblock Boundary behavior of {$\delta $} on weakly pseudo-convex manifolds of
  dimension two.
\newblock {\em J. Differential Geometry}, 6:523--542, 1972.
\newblock Collection of articles dedicated to S. S. Chern and D. C. Spencer on
  their sixtieth birthdays.

\bibitem[Kol10]{martinIMRN}
Martin Kol{\' a}{\v r}.
\newblock The {C}atlin multitype and biholomorphic equivalence of models.
\newblock {\em Int. Math. Res. Not. IMRN}, (18):3530--3548, 2010.

\bibitem[McN92]{mcneal}
Jeffery~D. McNeal.
\newblock Convex domains of finite type.
\newblock {\em J. Funct. Anal.}, 108(2):361--373, 1992.

\bibitem[Yu92]{jiyeyu}
Ji~Ye Yu.
\newblock Multitypes of convex domains.
\newblock {\em Indiana Univ. Math. J.}, 41(3):837--849, 1992.

\bibitem[Zai19]{DZ19}
Dmitri Zaitsev.
\newblock A geometric approach to {C}atlin's boundary systems.
\newblock {\em Ann. Inst. Fourier (Grenoble)}, 69(6):2635--2679, 2019.

\end{thebibliography}
\bibliographystyle{alpha}

\end{document}